\newcommand{\dotDelta}{{\vphantom{\Delta}\mathpalette\d@tD@lta\relax}}
\newcommand{\d@tD@lta}[2]{%
  \ooalign{\hidewidth$\m@th#1\mkern-1mu\cdot$\hidewidth\cr$\m@th#1\Delta$\cr}%
}
\date{}
\numberwithin{equation}{section}
\DeclareMathOperator{\re}{Re}
\DeclareMathOperator{\im}{Im}
\theoremstyle {definition} \newtheorem {definition} {Definition} [section] }
\theoremstyle {plain}  \newtheorem {theorem} [definition] {Theorem}}
\theoremstyle {plain}  }
\theoremstyle {plain} \newtheorem {proposition} [definition]{Proposition}}
\theoremstyle {plain} \newtheorem {lemma}[definition] {Lemma}}
\theoremstyle {definition} \newtheorem {remark}[definition] {Remark}}
\theoremstyle {plain} }
\def\R{{\mathbb{R}}}
\def\e{{\varepsilon}}
\def\Z{{\mathbb{Z}}}
\def\jnabla{{\langle \nabla \rangle}}
\newcommand{\abs}[1]{\lvert#1\rvert}
\newcommand{\norm}[1]{\left\|#1\right\|}
\begin{document}

\author[Lee and Yu]{Zachary Lee and Xueying Yu}

\address{Zachary Lee 
\newline 
\indent Department of Mathematics, The University of Texas at Austin\indent 
\newline \indent 2515 Speedway, PMA 8.100
Austin, TX 78712 \indent
}
\email{zl9868@utexas.edu}
\thanks{Z. Lee is partially supported by the Jack Kent Cooke Foundation.}

\address{Xueying  Yu
\newline \indent Department of Mathematics, Oregon State University\indent 
\newline \indent  Kidder Hall 368
Corvallis, OR 97331 \indent 
}
\email{xueying.yu@oregonstate.edu}
\thanks{X. Yu is partially supported by NSF DMS-2306429.}

\title[GWP and Scattering]{Global well-posedness and scattering for the defocusing septic one-dimensional NLS via new smoothing and almost Morawetz estimates }

\subjclass[2020]{35Q55}

\keywords{Nonlinear Schr\"odinger equation, I-method, Global well-posedness, Scattering}

\begin{abstract}
In this paper, we show that the one dimensional septic nonlinear Schr\"odinger equation is globally well-posed and scatters in $H^s (\R)$ when $s > 19/54$. We prove new smoothing estimates on the nonlinear Duhamel part of the solution and utilize a linear-nonlinear decomposition to take advantage of the gained regularity. We also prove new $L^{p+3}_{t,x}$ almost Morawetz estimates for the defocusing $p-$NLS adapted to the low-regularity setting, before specializing to the septic case $p=7$.

\end{abstract}

\maketitle

\setcounter{tocdepth}{1}
\tableofcontents

\parindent = 10pt     
\parskip = 8pt

\section{Introduction}

In this paper, we study the initial value problem for the septic, defocusing nonlinear Schr\"odinger equation (NLS) in one dimension,
\begin{align}\label{NLS}
\begin{cases}
i\partial_t u + \Delta u = \abs{u}^{6} u \\
u(0,x)=u_0(x)\in H^s(\R).
\end{cases}
\end{align}
This problem is known to be locally well-posed in $H^s(\R), s\ge s_c:=\frac{1}{6}$ \cite{Cazenave1988TheCP, CazenaveBook}. The critical Sobolev regularity $s_c$ is determined by the invariance of the $\dot{H}^{s_c}(\R)$ norm under the rescaling
\begin{align}
    u(t,x)\mapsto  \lambda^{-1/3} u(t/\lambda^2, x/\lambda)
\end{align}
which leaves the class of solutions to $\eqref{NLS}$ invariant for any $\lambda>0$. The equation $\eqref{NLS}$ enjoys the following conserved quantities, if finite for the initial data,
\begin{align}
    \text{Mass} & := M(u(t))= \int_{\R} |u(t,x)|^2 \,dx = M(u(0)) ,\\
    \text{Energy} & := E(u(t))= \frac{1}{2}\int_{\R} |\nabla u (t,x)|^2 \,dx + \frac{1}{8}\int_{\R}  |u|^8\,dx= E(u(0)) ,\\
    \text{Momentum} & := P(u(t))= \im \int_\R \overline{u(t,x)}\partial_x u(t,x) \,dx=P(u(0)).
\end{align}
Note that the first two are coercive for the $L^2_x$ and the $\dot{H}^1_x$ norm respectively, but the last one is not coercive for the $\dot{H}^{1/2}_x$ norm. The local theory iterates to produce global well-posedness for initial data $u_0(x)\in H^1_x(\R)$ \cite{Cazenave1988TheCP}. In this case, it is known that these global solutions are bounded in the scaling-invariant Strichartz norm $L^9_{t,x}$ and scatter. It is conjectured that global-well posedness and scattering hold for initial data as rough as $u_0\in \dot{H}^{1/6}_x(\R)$.\par
There has been much interest in extending local well-posedness results for $s_c\le s <1$, where energy conservation cannot be used directly, to global well-posedness. Bourgain first introduced the Fourier truncation approach \cite{Bou98} to resolve such questions; see also 
\cite{GC06, SY20, KPV00, DET16}. Later, the I-method would be developed by Colliander, Keel, Staffilani, Takaoka, and Tao as a refinement of this technique to prove global well posedness results for low-regularity initial data for the NLS in two and three dimensions \cite{CKSTT02, CKSTT04, CKSTT08}; see also 
\cite{CR11, DPSN08, DPSN07, DPSN072, FG07, Han12, LWX11, Su12, Tzi05, SY22}. This method was adapted to show global existence of rough solutions to the quintic, defocusing one-dimensional NLS in \cite{DPSN07}.

\par 
In \cite{CHVZ08}, it was proved that 
\begin{proposition}[\cite{CHVZ08}]
The Cauchy initial-value problem \eqref{NLS}
is globally well posed and scatters for initial data $u_0 \in H^s(\R)$, provided $s > 8/13=0.\overline{615384}.$ In particular, there exist $u_{\pm} \in H^s(\R)$ such that
\begin{align}
\norm{u(t) -e^{it\Delta} u_{\pm} }_{H^s(\R)} \rightarrow 0 \quad \text{ as } \quad t\rightarrow \pm \infty .
\end{align}
\end{proposition}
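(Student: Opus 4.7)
The plan is to use the I-method of Colliander--Keel--Staffilani--Takaoka--Tao, coupled with an interaction Morawetz estimate adapted to the 1D defocusing septic NLS. I would introduce the smoothing operator $I_N$ defined by $\widehat{I_N f}(\xi) = m_N(\xi) \hat f(\xi)$, where $m_N$ is a smooth, even, monotone Fourier multiplier equal to $1$ for $\abs{\xi} \le N$ and to $(N/\abs{\xi})^{1-s}$ for $\abs{\xi} \ge 2N$; then $I_N \colon H^s(\R) \to H^1(\R)$ boundedly and the \emph{modified energy} $E(I_N u)$ is well defined on $H^s$ but no longer conserved. The goal is to show that $E(I_N u(t))$ stays uniformly bounded in time, from which global existence in $H^s$ and scattering will follow once an initial rescaling is undone.

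First, I would use the scaling symmetry $u(t,x) \mapsto \lambda^{-1/3} u(\lambda^{-2} t, \lambda^{-1} x)$ to choose $\lambda = \lambda(N, \norm{u_0}_{H^s})$ so that $E(I_N u_0^{\lambda}) \le 1$; it then suffices to control the rescaled solution up to time $\lambda^2 T$. Second, I would set up an $H^1$-level local well-posedness theory for $I_N u$ on short intervals via a Cazenave--Weissler fixed-point argument in $X^{s,b}$. Third, I would derive an \emph{almost conservation law} of the form $\abs{E(I_N u(t_0+\delta)) - E(I_N u(t_0))} \lesssim N^{-\alpha}$ with $\alpha = \alpha(s) > 0$: differentiating the modified energy in time, $\partial_t E(I_N u)$ reduces to an eight-linear expression measuring the failure of $I_N$ to commute with $\abs{u}^6 u$, and one controls it by multilinear $X^{s,b}$ estimates that exploit cancellations among high-frequency interactions.

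Fourth, I would prove an \emph{almost interaction Morawetz} estimate tuned to $I_N u$: the natural output for the 1D $p$-NLS is an $L^{p+3}_{t,x}$ bound, so for $p=7$ one seeks control of $\norm{I_N u}_{L^{10}_{t,x}}$ by $\norm{I_N u}_{L^\infty_t H^{1/2}_x}$ and conserved quantities, modulo error terms of the same commutator type as above. Finally, I would iterate: partition $[0, \lambda^2 T]$ into subintervals on which the Morawetz-controlled norm is $O(1)$, apply the almost conservation law on each, and close a bootstrap to conclude $E(I_N u(t)) \lesssim 1$ throughout. Balancing the three competing quantities --- the decay rate $N^{-\alpha}$ of the almost conservation law, the number of subintervals supplied by Morawetz, and the growth rate $\lambda(N)$ of the scaling factor --- is what pins down the threshold $s > 8/13$. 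Scattering then follows from the resulting global Strichartz bound by the standard $H^s$ scattering argument. The hardest part will be extracting the sharpest admissible $\alpha$ in the almost conservation law: the septic nonlinearity produces an eight-linear form whose resonant and near-resonant frequency configurations require delicate case analysis, and each improvement in $\alpha$ translates directly into a lower regularity threshold.
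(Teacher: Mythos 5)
Your roadmap is the standard I-method architecture (rescaling to normalize $E(I_Nu_0)$, almost conservation of the modified energy, an almost interaction Morawetz estimate for $I_Nu$, and a bootstrap over subintervals), and that is indeed how \cite{CHVZ08} obtains the cited threshold; the statement is quoted here rather than reproved, so the comparison is with their argument. There is, however, one substantive divergence in your plan: you propose to run the iteration on the $L^{p+3}_{t,x}=L^{10}_{t,x}$ (Planchon--Vega) Morawetz quantity, whereas \cite{CHVZ08} used the $L^{8}_{t,x}$ estimate $\norm{u}_{L^{8}_{t,x}}^{8}\lesssim \norm{u_0}_{L^2}^{6}\norm{u}_{L^\infty_t\dot H^{1/2}_x}^{2}$. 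The exponent $8/13$ is exactly what the $L^{8}_{t,x}$ bookkeeping produces when the almost conservation law decays like $N^{-1}$; with the $L^{10}_{t,x}$ quantity and the same decay $N^{-\alpha}$, the balance gives $s>(3+\alpha)/(3+6\alpha)$, i.e.\ $s>4/9$ at $\alpha=1$, so your plan, if closed, would overshoot the stated proposition --- but closing it requires proving the $L^{10}_{t,x}$ almost Morawetz for $I_Nu$ with controlled commutator errors $F(u)=Iu|Iu|^{6}-I(u|u|^{6})$ in $L^1_tL^2_x$, which is precisely one of the new technical contributions of the present paper (Lemmas \ref{lem Morawetz} and \ref{lem Error}) and is not available off the shelf for \cite{CHVZ08}. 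A second, minor difference: you propose multilinear $X^{s,b}$ estimates for the energy increment, while both \cite{CHVZ08} and this paper work in Strichartz/bilinear-Strichartz spaces, which is simpler for the long-interval summation; either is workable. So your proposal is a valid strategy for a result at least as strong as the proposition, but it is not the route by which the $8/13$ threshold itself was obtained, and the hardest missing ingredient is the error analysis in the low-regularity $L^{10}_{t,x}$ Morawetz rather than the sharpening of $\alpha$ that you single out.
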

In this paper, we improve their result and make further progress towards the aforementioned conjecture by deriving new smoothing and almost Morawetz estimates in one dimension. 
\begin{theorem}[Main Theorem]\label{Main Theorem}
The Cauchy initial-value problem of \eqref{NLS} is globally well-posed and scatters for initial data $u_0 \in H^s(\R), s>19/54=0.3\overline{518}$. In particular, there exist $u_{\pm} \in H^s(\R)$ such that
\begin{align}
\norm{u(t) -e^{it\Delta} u_{\pm} }_{H^s(\R)} \rightarrow 0 \quad \text{ as } \quad t\rightarrow \pm \infty .
\end{align}
\end{theorem}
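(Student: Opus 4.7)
My plan is to run the I-method on a rescaled version of \eqref{NLS}, combining a sharpened almost conservation law (the source of the new exponent $19/54$) with a new almost Morawetz estimate. Fix a smooth Fourier multiplier $I = I_N$ that is the identity on $|\xi|\le N$ and has symbol behaving like $(N/|\xi|)^{1-s}$ for $|\xi|\gg N$, so that $\|Iu\|_{H^1}\lesssim N^{1-s}\|u\|_{H^s}$. Rescale $u^\lambda(t,x) = \lambda^{-1/3}u(t/\lambda^2, x/\lambda)$ and pick $\lambda = \lambda(N)$ so that $E(Iu_0^\lambda)\le \tfrac{1}{2}$. Subcritical local theory then yields, on unit-length intervals $[t_k, t_{k+1}]$, uniform $X^{1, 1/2+}$ control of $Iu^\lambda$ as long as its modified energy stays bounded.

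The core step is the almost conservation law
\[
  |E(Iu^\lambda(t_{k+1})) - E(Iu^\lambda(t_k))|\lesssim N^{-\alpha}
\]
with $\alpha$ as large as possible. To improve on \cite{CHVZ08}, I use a linear--nonlinear decomposition $u^\lambda = u_\ell + u_{n\ell}$ with $u_\ell(t) = e^{i(t-t_k)\Delta}u^\lambda(t_k)$ and $u_{n\ell}$ the Duhamel correction. A smoothing estimate will show $u_{n\ell}\in H^{s+\theta}_x$ for some $\theta>0$, so whenever a copy of $u_{n\ell}$ appears among the seven factors of the commutator $\langle [I, |u|^6]u, I\partial_t u\rangle$ that drives $\tfrac{d}{dt}E(Iu^\lambda)$, it absorbs high-frequency projections and saves a fractional power of $N$. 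Purely linear interactions, with all factors equal to $u_\ell$, are controlled by Fourier localization, bilinear Strichartz, and the cancellation properties of the commutator.

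Alongside this, I establish a new almost Morawetz inequality of the form
\[
  \|Iu^\lambda\|_{L^{10}_{t,x}([t_k,t_{k+1}]\times \R)}^{10}\lesssim \|Iu^\lambda\|_{L^\infty_t H^{1/2}_x}^{c} + \mathrm{Err}(N),
\]
obtained by applying the Lin--Strauss Morawetz identity to $Iu^\lambda$ and estimating the error arising because $I$ does not commute with $|\cdot|^6\cdot$. This is the $p=7$ case of a general $L^{p+3}_{t,x}$ almost Morawetz bound for the defocusing $p$-NLS, adapted to the I-method regularity.

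Finally, I iterate over $T\lambda^2/\delta$ unit intervals: the total drift in modified energy is $\lesssim (T\lambda^2/\delta)N^{-\alpha}$, which must stay $O(1)$, while the almost Morawetz bound yields a Strichartz-type control of $u^\lambda$ on arbitrarily long intervals. Optimizing $\lambda(N)$ against $\alpha$, subject to $\|Iu_0^\lambda\|_{H^1}\lesssim 1$, produces the threshold $s>19/54$; undoing the rescaling gives global $H^s$ well-posedness, and the uniform $L^{10}_{t,x}$ bound delivers scattering by standard arguments. The principal obstacle is squeezing the largest possible $\alpha$ out of the almost conservation law: this requires a delicate case analysis of the frequency interactions among the seven factors, especially in configurations where at most one factor is the Duhamel piece $u_{n\ell}$, so that only a single copy of the smoothing gain is available and the high-high-high resonances must be extracted by multilinear refinements rather than smoothing alone.
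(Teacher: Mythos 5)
Your architecture matches the paper's: the I-method with the rescaling $\lambda^{-1/3}u(x/\lambda,t/\lambda^2)$ and $E(Iu_{0,\lambda})\le 1/2$, a linear--nonlinear decomposition of Roy/Dodson type fed by smoothing estimates on the Duhamel piece, and an $L^{10}_{t,x}$ almost Morawetz inequality for $Iu$ whose error is the commutator $F(u)=Iu|Iu|^{6}-I(u|u|^{6})$. (The paper works in Strichartz spaces $S^0$ on intervals where $\|Iu\|_{L^{10}_{t,x}}\le\e$ rather than in $X^{1,1/2+}$ on unit intervals; that difference is cosmetic.) But there are two genuine gaps. The first is your interval count: iterating over $T\lambda^2/\delta$ unit intervals makes the total energy drift $T$-dependent and cannot yield scattering or a uniform-in-$T$ bound. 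The count must come from the Morawetz norm itself: one partitions $[0,T]$ into $\sim\|Iu_\lambda\|_{L^{10}_{t,x}}^{10}\e^{-10}\lesssim \lambda^{1/2}\|u_0\|_{L^2}^{3}\e^{-10}$ intervals on which $\|Iu_\lambda\|_{L^{10}_{t,x}}\le\e$, a number independent of $T$, and this is what closes the bootstrap globally and delivers scattering.

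The second gap is more serious: a per-interval almost conservation law $|\Delta E(Iu_\lambda)|\lesssim N^{-\alpha}$ with $\alpha=7/4$ is not what the smoothing estimates give on a single interval, and without the effective exponent $7/4$ you land at $s>3/8$, not $19/54$. On one small-Morawetz interval the commutator analysis yields $|\Delta E(Iu)|\lesssim N^{-5}+N^{-3/2+}\|P_{>cN}\jnabla Iu\|_{L^4_tL^\infty_x}$, and the last factor is only $O(1)$ there because its linear part $e^{i(t-t_k)\Delta}u(t_k)$ has no high-frequency smallness; inserting the smoothing gain into the factors that are genuinely nonlinear does not remove this obstruction, since in the worst configurations only the linear part of the high-frequency factor survives. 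The paper's essential additional device is a double-layer (block) summation: group the $\sim\lambda^{1/2}\e^{-10}$ intervals into blocks of $N$ consecutive ones, use the two smoothing estimates ($\|P_{>N}\jnabla Iu^{nl}\|_{S^0}\lesssim N^{-1/4}$ and $\|P_{>N}\jnabla Iu^{nl}\|_{L^\infty_tL^2_x}\lesssim N^{-1+}$) to show that $\|P_{>cN}\jnabla Iu\|_{L^4_tL^\infty_x}$ over an \emph{entire block} is still $O(1)$ (the free evolution enters once per block and the $N$ accumulated Duhamel pieces contribute $N\cdot N^{-1}$ in $L^\infty_tL^2_x$ and $(N\cdot N^{-1})^{1/4}$ in $L^4_tL^\infty_x$), and then apply H\"older in the interval index to bound the block increment by $N^{-3/2+}\cdot N^{3/4}=N^{-3/4+}$, i.e., an effective $\alpha=7/4$. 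Your proposal contains no mechanism for this summation, so the assertion that optimizing $\lambda(N)$ ``produces the threshold $s>19/54$'' presupposes a decay rate the stated single-interval scheme does not achieve.
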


Though we perform calculations for the septic case, our calculations straightforwardly extend to proving global well-posedness (GWP) and scattering for algebraic one-dimensional NLS.
\begin{theorem}[Results for $(2k+1)-$NLS]\label{2k+1 NLS Theorem}
The Cauchy initial-value-problem 
    \begin{align}
        \begin{cases}
        i\partial_t u + \Delta u = \abs{u}^{2k} u ,\\
        u(0,x)=u_0(x)\in H^{s}(\R).
    \end{cases}
\end{align}
with integer $k\ge 3$ is globally well-posed and scatters for initial data $u_0\in H^s(\R)$ for $s>s_k= \frac{19k-38}{26k-24}$. 
\end{theorem}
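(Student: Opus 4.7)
The plan is to run the I-method plus almost Morawetz bootstrap of Theorem~\ref{Main Theorem} with the nonlinearity degree $2k+1$ left as a parameter throughout. A key point is that the new $L^{p+3}_{t,x}$ almost Morawetz estimate advertised in the abstract is proved for general $p$-NLS before specializing to $p=7$, and the Duhamel smoothing estimate similarly extends. Consequently no new analytic input is needed, and the proof reduces to tracking how the various exponents depend on $k$ and verifying that the optimal balance reproduces $s_k = \frac{19k-38}{26k-24}$.

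First I would rescale via $u_\lambda(t,x) = \lambda^{-1/k} u(t/\lambda^2, x/\lambda)$, which leaves the $(2k+1)$-NLS invariant and fixes the critical $\dot H^{s_c}$ norm with $s_c = \frac{1}{2} - \frac{1}{k}$ (equal to $1/6$ when $k=3$). Choosing $\lambda = \lambda(N)$ so that $E(I_N u_{0,\lambda}) \lesssim 1$ converts the subcritical $H^s$ hypothesis on $u_0$ into a scaling relation $\lambda \sim N^{(1-s)/(1-s_c)}$, up to logarithmic factors. On each local interval on which $\|I_N u_\lambda\|_{L^{2k+4}_{t,x}}$ is $O(1)$, the I-method local theory provides short-time control, and expanding the $(2k+2)$-linear commutator form that governs $\partial_t E(I_N u_\lambda)$, then applying the new Duhamel smoothing estimate in tandem with the linear-nonlinear decomposition, yields a per-interval energy increment of the schematic form $N^{-\alpha(k)}$. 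The low-regularity $L^{2k+4}_{t,x}$ almost Morawetz estimate then controls the total number of such intervals needed to cover $[0,\lambda^2]$ in terms of the modified energy plus a similarly small power of $N$.

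The final step is the standard bootstrap: demanding that the summed energy increment across all iterations remain $O(1)$, and reconciling this with the scaling relation from the previous step, yields an inequality in $s$ that is saturated exactly when $s = \frac{19k-38}{26k-24}$; substituting $k=3$ recovers $19/54$, consistent with Theorem~\ref{Main Theorem}. Once the resulting a priori $H^s$ bound is globalized, scattering follows from the uniform $L^{2k+4}_{t,x}$ bound via the usual subcritical asymptotic-completeness argument.

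The main obstacle is the expansion of the $(2k+2)$-linear commutator form: the number of distinct frequency interaction regions in $(\xi_1, \dots, \xi_{2k+2})$ that must be analyzed grows with $k$, and one must verify that the worst case---roughly, all frequencies of comparable size near $N$---still yields the smoothing gain $\alpha(k)$ responsible for the stated threshold. A secondary subtlety is to confirm that the exponent balance optimized for $k=3$ remains the \emph{genuine} optimum for every $k \geq 3$, rather than a coincidence of septic arithmetic; monotonicity of $s_k$ in $k$ is a necessary consistency check but not a proof of optimality, so the balance ought to be re-derived from scratch for general $k$ rather than merely extrapolated.
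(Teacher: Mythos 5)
Your approach is the same as the paper's: the paper offers no separate proof of this theorem, stating only that it follows by rerunning the septic argument with the nonlinearity degree as a parameter, which is exactly what you propose, and your inventory of the ingredients that must be re-derived for general $k$ (the $L^{p+3}_{t,x}$ almost Morawetz estimate, the $(2k+2)$-linear commutator form, the per-interval smoothing gain, and the final exponent balance) is accurate. One concrete slip needs fixing before the balance actually produces $s_k$: the scaling relation should be $\lambda \sim N^{(1-s)/(s-s_c)}$ with $s_c = \tfrac12 - \tfrac1k$, not $N^{(1-s)/(1-s_c)}$, and there are no logarithmic factors. Since $\|u_{0,\lambda}\|_{L^2(\R)}^3 \sim \lambda^{3s_c}$ controls the number of Morawetz subintervals and each block of $N$ local intervals contributes $N^{-3/4}$ to the energy increment (an effective decay exponent of $7/4$), the bootstrap closes precisely when $3s_c\cdot\frac{1-s}{s-s_c} < \frac74$, which rearranges to $19(k-2) < s(26k-24)$, i.e.\ $s > \frac{19k-38}{26k-24}$; with your denominator $1-s_c$ the inequality would come out different, so the exponent must be corrected in the write-up.
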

\begin{remark}
    Note that $s_k \to 19/26=0.7\overline{30792}$ as $k\to\infty$. This threshold is an improvement over the one in \cite{CHVZ08} of $s>s_k'=\frac{8k-16}{9k-14}\to 8/9=0.\overline{88}$ as $k\to\infty$.
\end{remark}

For ease of exposition, we will focus on proving Theorem \ref{Main Theorem}. Theorem \ref{2k+1 NLS Theorem} follows by essentially the same arguments with no important difficulty added.

\begin{remark}
    Note that our new result for the septic equation now include the case $u_0\in H^{1/2}_x(\R)$, which distinguishes itself by being the largest $L^2$-based Sobolev space all of whose members have finite mass and momentum.
\end{remark}

\begin{remark}
    We may plot the GWP threshold (and scattering) as a function of the effective decay exponent $\alpha$ in the energy increment (see Figure \ref{plot})
    \begin{align}
    \sup_{t_1, t_2 \in J}|E(Iu(t_2)) - E(Iu(t_1))| \lesssim N^{-\alpha},    
    \end{align}
    where $I$ is defined in \eqref{eq m} and \eqref{eq I}.

\begin{center}

\begin{figure}[h]

\begin{tikzpicture}
    \begin{axis}[
        domain=0:10, 
        ymin=0,ymax=1.1,
        samples=100,
        axis lines=left,
        xlabel={Decay Exponent of Energy Increment},
        ylabel={GWP threshod},
        legend style={at={(1.05,0)}, anchor=south west, font=\small, draw=none, fill=none} 
    ]

        \addplot[red, thick, domain=0:10] { (21+3*x) / (21+18*x) };
        \addlegendentry{Morawetz $L_{t,x}^{8}$}

        \addplot[blue, thick] { (3+x) / (3+6*x) };
        \addlegendentry{Morawetz $L_{t,x}^{10}$}
        
        \addplot[green, thick, domain=0:10]{1/6};
        \addlegendentry{$s_{\text{critical}}$}

        \node[red, circle, fill, inner sep=1pt, label={85:{$A$}}] (a) at (100,800/13) {};
        \node[blue, circle, fill, inner sep=1pt, label={-90:{$B$}}] (a) at (700/4,1900/54) {};
    \end{axis}
\end{tikzpicture}
    \caption{
        Note that the red curve uses the $L^8_{t,x}$ Morawetz estimate as in \cite{CHVZ08}, while the blue curve uses the $L^{10}_{t,x}$ Morawetz estimate as in this article. The point $A$  refers to the point $(1, 8/13)$ while the point $B$ refers to $(7/4, 19/54)$, the threshold in this paper. The green line labels $s_{\text{critical}}=1/6$.
        }
        \label{plot}
        \end{figure}
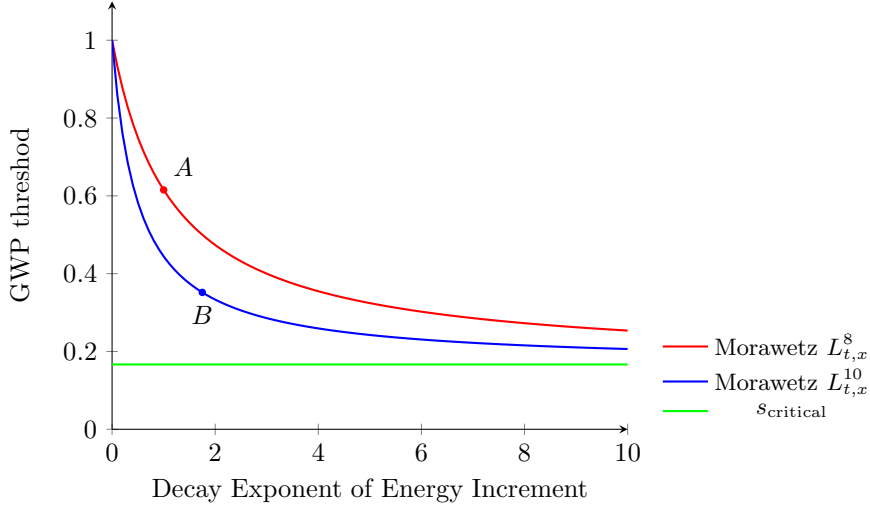
\end{center}
     Note that in our case, $\alpha=7/4$ after our use of the smoothing estimates.

\end{remark}

\begin{remark}
        Even for the same energy increment decay, the $L^{10}_{t,x}$ Morawetz estimate is able to obtain lower thresholds for GWP (and scattering). To obtain GWP and scattering thresholds closer and closer to the critical regularity $1/6$, the decay exponent must grow larger and larger. Unfortunately, the closer we get to the critical regularity, the higher the improvement to the energy increment decay is necessary to obtain similar gains in the GWP threshold. Thus, it is not realistic to expect significant improvements from the I-method in terms of further lowering the GWP regularity threshold unless we are able to vastly improve the decay rate of the energy increment. 
\end{remark}
We use a linear-nonlinear decomposition as introduced by Roy \cite{Roy09} and utilized by Dodson \cite{Dod13} and Su \cite{Su12} to take advantage of our new smoothing estimates before proving and utilizing a low-regularity Morawetz estimate to perform the standard bootstrap argument. The linear-nonlinear decomposition better estimates the long-time energy increment. For an interval $J_l$, we prove the energy increment $\Delta E(Iu)$ on $J_l$ satisfies 
\begin{align}
    \Delta E(Iu) \lesssim \frac{1}{N^5} + \frac{\norm{P_{>cN} \langle \nabla \rangle Iu}_{L^4_t L_x^\infty(J_l\times\R)}}{N^{3/2-}},
\end{align}
which not only is an improved estimate over 
the $1/N$ decay proved in \cite{CHVZ08}, but can also more effectively be summed over many intervals $J_l$ by Hölder's inequality. 
\par
Since \eqref{NLS} is $H^s$ subcritical, the length of the local-wellposedness interval depends only on the norm $\norm{u_0}_{H^s_x(\R)}$ of the solution at the initial time. Hence, global well-posedness follows if we can establish a uniform bound, as we do in this paper, on the $H^s$ norm of $u$ on an arbitrary time interval. However, the $H^s$ norm is not conserved, so we must smoothen out the initial data by way of the $I$-operator, whose  multiplier is given by
\begin{align}\label{eq m}
    m(\xi)=\begin{cases}
        1 & |\xi|\le N ,\\
        \left(\frac{N}{|\xi|}\right)^{1-s} & |\xi|\ge 2N,
    \end{cases}
\end{align}
with
\begin{align}\label{eq I}
\widehat{I f} = m(\xi) \widehat{f}(\xi)
\end{align}
and control the $H^s$ norm of $u$ by $E(I(u))$ the energy of $Iu$, which we show is \textit{almost} conserved. The increment in the energy of $Iu$ is shown to be controlled on interval where  the Morawetz $L^{10}_{t,x}$ norm of $Iu$ is small. Conversely, the Morawetz norm of $Iu$ is shown to be bounded above by the energy of $Iu$. This sets us up for a bootstrap argument in Section \ref{Proof of Main Theorem}.
\par 
To discuss the important Morawetz estimates, we say that $u(t,x)$ solves the one-dimensional defocusing $p$-NLS with initial data $u_0$ if
\begin{align}\label{p-NLS}
    \begin{cases} 
    i\partial_t u + \Delta u = |u|^{p-1} u\\
    u(0,x)=u_0.
    \end{cases}
\end{align}
The following Morawetz estimate was proved in \cite{CHVZ08}:
\begin{lemma}
    Let $u(t,x)$ solve \eqref{p-NLS} with    $p>1$. Then, we have that
    \begin{align}
            \norm{u}_{L^{8}_{t,x}([0,T]\times \R)}^{8} \lesssim \norm{u_0}_{L^2 (\R)}^6   \norm{u(t)}_{L^\infty_t\dot{H}^{1/2}_x ([0,T]\times \R)}^2.
    \end{align}
\end{lemma}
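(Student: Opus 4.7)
This is a form of the one-dimensional interaction Morawetz estimate in the style of Planchon--Vega. The idea is to extract a bilinear virial-type identity controlling $\int\!\int (\partial_x\abs{u}^2)^2\,dx\,dt$, then pass to the $L^8_{t,x}$ norm via one-dimensional Gagliardo--Nirenberg.

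First, introduce the Morawetz action
\begin{align}
M(t) = 2\iint_{\R \times \R} \mathrm{sgn}(y-x)\, \rho(t,x)\, j(t,y)\, dx\, dy,
\end{align}
with mass density $\rho = \abs{u}^2$ and momentum density $j = \im(\bar u\,\partial_x u)$. Differentiating $M$ using the continuity equation $\partial_t \rho = 2\partial_x j$ and the NLS momentum identity $\partial_t j + \partial_x\bigl(2\abs{\partial_x u}^2 - \tfrac{1}{2}\partial_x^2 \rho + \tfrac{p-1}{p+1}\abs{u}^{p+1}\bigr) = 0$, and integrating by parts using $\partial_x \mathrm{sgn}(y-x) = -2\delta(y-x)$, localizes all contributions to the diagonal $x=y$. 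Combining these with the pointwise identity $\rho\,\abs{\partial_x u}^2 = j^2 + \tfrac14(\partial_x \rho)^2$ and the defocusing sign of the nonlinearity yields the monotonicity
\begin{align}
\frac{d}{dt}M(t) \gtrsim \int_\R (\partial_x \rho)^2\, dx + \int_\R \abs{u}^{p+3}\, dx.
\end{align}

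Second, I need the \emph{a priori} bound $\sup_{t\in[0,T]} \abs{M(t)} \lesssim \norm{u_0}_{L^2}^2\, \norm{u}_{L^\infty_t \dot H^{1/2}_x}^2$. A Plancherel computation using $\widehat{\mathrm{sgn}}(\xi) \sim 1/\xi$ recasts $M$ as a $\dot H^{-1/2}$-type pairing between $\rho$ and $j$; after subtracting the zero-frequency/momentum contribution (which must be handled carefully, since $\abs{u}^2 \notin \dot H^{-1/2}$ generically as $\widehat{\abs{u}^2}(0) = \norm{u_0}_{L^2}^2 \neq 0$), bilinear Littlewood--Paley estimates extract the bound. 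Integrating the monotonicity over $[0,T]$ and using mass conservation $\norm{u(t)}_{L^2_x} = \norm{u_0}_{L^2}$ then gives
\begin{align}
\int_0^T \!\! \int_\R (\partial_x \rho)^2\, dx\, dt \lesssim \norm{u_0}_{L^2}^2\, \norm{u}_{L^\infty_t \dot H^{1/2}_x}^2.
\end{align}

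Finally, convert to $L^8_{t,x}$ using the one-dimensional Gagliardo--Nirenberg inequalities $\norm{f}_{L^4_x}^2 \lesssim \norm{f}_{L^2_x}\, \norm{f}_{\dot H^{1/2}_x}$ (applied with $f = \rho$), $\norm{\rho}_{\dot H^{1/2}_x}^2 \lesssim \norm{\rho}_{L^2_x}\, \norm{\partial_x \rho}_{L^2_x}$, and $\norm{u}_{L^4_x}^4 \lesssim \norm{u}_{L^2_x}^2\, \norm{u}_{\dot H^{1/2}_x}^2$. Chained pointwise in $t$ these give $\norm{u(t)}_{L^8_x}^8 = \norm{\rho(t)}_{L^4_x}^4 \lesssim \norm{u(t)}_{L^2_x}^3\, \norm{u(t)}_{\dot H^{1/2}_x}^3\, \norm{\partial_x \rho(t)}_{L^2_x}$; integrating in time with Cauchy--Schwarz and combining with mass conservation and the Morawetz bound produces the claimed estimate
\begin{align}
\norm{u}_{L^8_{t,x}([0,T]\times\R)}^8 \lesssim \norm{u_0}_{L^2}^6\, \norm{u}_{L^\infty_t \dot H^{1/2}_x}^2.
\end{align}

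The main technical obstacle is the \emph{a priori} bound on $M(t)$: the kernel $\mathrm{sgn}$ is only conditionally integrable and its zero-frequency singularity must be carefully dispatched by subtracting the total momentum contribution (or equivalently working with a momentum-corrected $M$). The monotonicity computation and the final Gagliardo--Nirenberg chain are standard, if somewhat lengthy, manipulations.
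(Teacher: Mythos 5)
The paper does not actually prove this lemma; it is quoted from \cite{CHVZ08}. Your plan — interaction Morawetz action with the $\mathrm{sgn}$ kernel, monotonicity yielding $\int_0^T\!\!\int(\partial_x|u|^2)^2\,dx\,dt+\int_0^T\!\!\int|u|^{p+3}\,dx\,dt\lesssim\sup_t|M(t)|$, the a priori bound $\sup_t|M(t)|\lesssim\norm{u_0}_{L^2}^2\norm{u}_{L^\infty_t\dot H^{1/2}_x}^2$, then Gagliardo--Nirenberg — is exactly the standard route, and is the same machinery this paper deploys for its Lemma 7.2 ($L^{p+3}$ almost Morawetz). Two remarks, one of which is a genuine gap.

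The gap is in your final Gagliardo--Nirenberg chain. The pointwise-in-$t$ inequality you derive, $\norm{u(t)}_{L^8_x}^8\lesssim\norm{u(t)}_{L^2_x}^3\norm{u(t)}_{\dot H^{1/2}_x}^3\norm{\partial_x\rho(t)}_{L^2_x}$, is correct but carries $\norm{\partial_x\rho(t)}_{L^2_x}$ to the \emph{first} power, whereas the Morawetz identity controls its square integral in time. Applying Cauchy--Schwarz in $t$ as you propose gives $\int_0^T\norm{\partial_x\rho}_{L^2_x}\,dt\le T^{1/2}\norm{\partial_x\rho}_{L^2_{t,x}}$, producing a bound of the form $T^{1/2}\norm{u_0}_{L^2}^4\norm{u}_{L^\infty_t\dot H^{1/2}_x}^4$ — the $T^{1/2}$ factor is fatal (the estimate must be uniform in $T$ to be of any use for global theory and scattering) and the powers of $\norm{u_0}_{L^2}$ and $\norm{u}_{\dot H^{1/2}}$ are wrong. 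The fix is to use the homogeneity-correct interpolation $\norm{f}_{L^4_x}\lesssim\norm{f}_{L^1_x}^{1/2}\norm{\partial_x f}_{L^2_x}^{1/2}$ (e.g.\ via $\norm{f}_{L^\infty}\lesssim\norm{f}_{L^1}^{1/3}\norm{\partial_x f}_{L^2}^{2/3}$), which with $f=\rho=|u|^2$ gives
\begin{align}
\int_\R \rho^4\,dx \lesssim \norm{\rho}_{L^1_x}^2\norm{\partial_x\rho}_{L^2_x}^2=\norm{u(t)}_{L^2_x}^4\norm{\partial_x\rho(t)}_{L^2_x}^2 ,
\end{align}
so that the time integration lands squarely on $\norm{\partial_x\rho}_{L^2_{t,x}}^2\lesssim\norm{u_0}_{L^2}^2\norm{u}_{L^\infty_t\dot H^{1/2}_x}^2$ and mass conservation yields exactly $\norm{u_0}_{L^2}^6\norm{u}_{L^\infty_t\dot H^{1/2}_x}^2$.

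A secondary point: the a priori bound on $M(t)$ is more elementary than your Plancherel/zero-frequency discussion suggests. One never needs $|u|^2\in\dot H^{-1/2}$: simply bound $|M(t)|\le 2\norm{u(t)}_{L^2_x}^2\sup_y\bigl|\int_\R\mathrm{sgn}(y-x)\im(\bar u\,\partial_x u)(x)\,dx\bigr|$ and invoke the classical lemma that this supremum is $\lesssim\norm{u(t)}_{\dot H^{1/2}_x}^2$ (this is where the half derivative enters). Also watch your sign conventions: with the kernel $\mathrm{sgn}(y-x)$ paired with $\rho(x)j(y)$ the monotonicity may come out with the opposite sign to the one you wrote, though this is cosmetic.
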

We will in this paper make use of the following Morawetz estimate for solutions to the defocusing $p$-NLS proved in \cite{PV09} that is useful for $u$ in $\dot{H}^{1/2}_x(\R)$ on the slab $[0,T]\times \R$.
\begin{lemma}[Morawetz Estimate in \cite{PV09}] If $u$ solves \eqref{p-NLS} with $p> 1$, then 
\begin{align}
    \norm{u}_{L^{p+3}_{t,x}([0,T]\times \R)}^{p+3} \lesssim_p \norm{u_0}_{L^2 (\R)}^2   \norm{u(t)}_{L^\infty_t\dot{H}^{1/2}_x ([0,T]\times \R)}^2.
\end{align}
\end{lemma}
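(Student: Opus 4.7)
The plan is to adapt the bilinear-virial (interaction Morawetz) argument of Planchon and Vega. Consider the tensor product $U(t,x,y) := u(t,x)\,u(t,y)$, which as a function of $(t,x,y) \in \R\times\R^2$ satisfies the two-dimensional Schr\"odinger-type equation
\[
i\partial_t U + \Delta_{x,y} U = \big(|u(t,x)|^{p-1}+|u(t,y)|^{p-1}\big) U.
\]
I would apply the standard virial/Morawetz identity for this equation against the 1D-adapted weight $a(x,y) := |x-y|$, whose distributional Laplacian is $\Delta_{x,y} a = 4\,\delta(x-y)$, placing a Dirac mass on the diagonal $\{x=y\}$.

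First I introduce the Morawetz action
\[
M_a(t) := 2\im \iint_{\R^2} \overline{U}\,\nabla U \cdot \nabla a\,dx\,dy,
\]
and differentiate in $t$ using the equation for $U$. The computation produces three contributions: (i) a quadratic form in $\nabla U$ weighted by the Hessian $H_a = 2\delta(x-y)\begin{pmatrix}1 & -1\\-1 & 1\end{pmatrix}$, which is positive semi-definite; (ii) a biharmonic term $-\iint|U|^2\Delta^2 a$, which evaluates distributionally to a non-negative multiple of $\int(\partial_x|u|^2)^2\,dx$; and (iii) a potential contribution which, combining $\Delta a \propto \delta(x-y)$ with the symmetric nonlinearity $|u(x)|^{p-1}|U|^2 + (x\!\leftrightarrow\!y)$, evaluates to a positive multiple of $\int|u(t,x)|^{p+3}\,dx$. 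Discarding the two non-negative pieces (i)--(ii) and integrating in $t$ over $[0,T]$, we obtain
\[
\|u\|_{L^{p+3}_{t,x}([0,T]\times\R)}^{p+3} \lesssim_p \sup_{t\in[0,T]} |M_a(t)|.
\]

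The hard part is the sharp upper bound $|M_a(t)| \lesssim \|u(t)\|_{L^2_x}^2\,\|u(t)\|_{\dot H^{1/2}_x}^2$. Carrying out the inner products and symmetrizing in $x\leftrightarrow y$ reduces the action to the 1D form
\[
M_a(t) = c\int_\R p(t,x)\,(\mathrm{sgn}\!*\!\rho)(t,x)\,dx, \qquad \rho = |u|^2,\ p = \im(\bar u\,u_x),
\]
for some nonzero constant $c$. The naive $L^1$--$L^\infty$ pairing yields only the crude $\|u\|_{L^2}^3\|u\|_{\dot H^1}$, and the Fourier-side Cauchy-Schwarz through $\|p\|_{\dot H^{-1/2}}\|\rho\|_{\dot H^{-1/2}}$ is literally infinite because $\widehat{|u|^2}(0) = \|u\|_{L^2}^2 \neq 0$, so cancellation in the sign kernel must be exploited. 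My plan is to pass via Plancherel to a principal-value four-linear integrand in $\widehat u$ supported on the constraint surface $\{\xi_1+\xi_4=\xi_2+\xi_3\}$, and then symmetrize across the two $\bar u u$-pairs $(\xi_1,\xi_2)\leftrightarrow(\xi_3,\xi_4)$; the momentum constraint permits the singular weight $(\xi_1+\xi_2)/(\xi_2-\xi_1)$ to be replaced by a symmetric kernel bounded by $|\xi_1|^{1/2}|\xi_2|^{1/2}|\xi_3|^{1/2}|\xi_4|^{1/2}$, after which Cauchy-Schwarz delivers exactly two factors of $\|u\|_{L^2}$ and two of $\|u\|_{\dot H^{1/2}}$. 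Taking the supremum in $t$ and invoking mass conservation $\|u(t)\|_{L^2}=\|u_0\|_{L^2}$ then completes the argument.
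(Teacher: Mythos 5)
Your virial skeleton is sound and is exactly the Planchon--Vega machinery: the Hessian of $a(x,y)=|x-y|$ and the biharmonic term $-\iint|U|^2\Delta^2a$ both contribute with a good sign (the paper's own computation of $K+A$ in Lemma \ref{lem Morawetz}, which redoes this calculation for $Iu$, confirms the coefficients $8\int(\im \overline{Iu}\,\partial_xIu)^2$, $2\int(\partial_x|Iu|^2)^2$ and $\frac{4p}{p+1}\int|Iu|^{p+3}$), and the potential term gives a positive multiple of $\int|u|^{p+3}$ for $p>1$. Note, for context, that the paper does not prove this lemma at all --- it is quoted from \cite{PV09} --- and in its own almost-Morawetz argument it deliberately settles for the cruder action bound $|M_t|\lesssim\|u_0\|_{L^2}^3\|u\|_{L^\infty_t\dot H^1}$, which is all it needs there since $E(Iu)$ controls $\|Iu\|_{\dot H^1}$.

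The gap is precisely at the one genuinely hard step, the bound $\sup_t|M_a(t)|\lesssim\|u\|_{L^2}^2\|u\|_{\dot H^{1/2}}^2$. You correctly diagnose why the two naive pairings fail, but the single concrete ingredient you offer in their place cannot be right: the quadrilinear symbol is (a principal value of) $\frac{\xi_1+\xi_2}{\xi_2-\xi_1}$, which is homogeneous of degree zero and does \emph{not} vanish when individual frequencies go to zero (the $\widehat{|u|^2}(0)$ contribution you yourself flagged survives), whereas your proposed majorant $|\xi_1|^{1/2}|\xi_2|^{1/2}|\xi_3|^{1/2}|\xi_4|^{1/2}$ is homogeneous of degree two and vanishes there; it also fails near the singular set $\xi_1=\xi_2$, where the symbol blows up. Moreover, even if such a pointwise bound held, placing a half-derivative on all four factors would produce $\||\nabla|^{1/2}u\|_{L^2}^4$-type quantities, not the asymmetric $\|u\|_{L^2}^2\||\nabla|^{1/2}u\|_{L^2}^2$; you need half-derivatives on exactly two of the four frequencies. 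The standard way to close this is in physical space: write $M_a=2\int\rho(y)\bigl(\int\operatorname{sgn}(x-y)\,p(x)\,dx\bigr)dy$ with $\rho=|u|^2$, $p=\im(\bar u\,\partial_xu)$, put $\rho$ in $L^1_y$ using mass conservation, and prove the uniform truncated-momentum estimate $\sup_y\bigl|\int\operatorname{sgn}(x-y)\,\im(\bar u\,\partial_xu)(x)\,dx\bigr|\lesssim\||\nabla|^{1/2}u\|_{L^2}^2$, which is a Calder\'on-commutator/Hardy--Hilbert fact and is the actual content of the lemma in \cite{PV09}. As written, your proposal asserts the crux rather than proving it.
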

In order to prove the main theorem, we show that a `low-regularity' Morawetz estimate holds for solutions  $u$ to the one-dimensional defocusing $p$-NLS; see Lemma \ref{lem Morawetz}.
We will later specialize to $p=7$ to treat the septic case. 
\par
In Section \ref{Preliminaries}, we outline some notation and reference Strichartz estimates and Littlewood-Paley theory. In Section \ref{Bilinear Estimates}, we present the classical bilinear estimate and an interpolated version that we will use several times throughout the paper. In Section \ref{Local Well-Posedness}, we prove a version of local well-posedness involving the Morawetz and Strichartz norms of $Iu$. In Section \ref{Energy Increment}, we prove the upper bound for the increment in $E(Iu(t))$. In Section \ref{Smoothing Estimates}, we prove our new smoothing estimates. In Section \ref{Almost Morawetz Estimate}, we prove our new almost-Morawetz estimate. Finally, in Section \ref{Proof of Main Theorem}, we prove Theorem \ref{Main Theorem}.

\section{Preliminaries}\label{Preliminaries}
We use the notation $X\lesssim Y$ to mean that $X\le CY$ for some $C>0$. If both $X\lesssim Y$ and $Y\lesssim X$, then we write $X\sim Y$. If $C$ depends on a parameter $\alpha$, we will write $X\lesssim_\alpha Y$. We write $a\pm$ if $a\pm \delta$ for all small enough $\delta>0$.
\par 
Given an interval $J$, we define the mixed space-time norms
\begin{align}
    \norm{u}_{L^p_t L^q_x(J\times \R)} = \left(\int_J \left(\int_\R |u(t,x)|^q\,dx\right)^{p/q}\,dt\right)^{1/p}
\end{align}
with the usual modification if $p$ or $q$ is infinite. When $p=q$, we will often denote the norm as $\norm{\cdot}_{L^p_{t,x}(J\times \R)}$.
Note that we sometimes abbreviate these norms as $\norm{u}_{p,q}$ when the interval is understood.  \par 
With $\widehat{f}$ for the Fourier transform of $f$, we define the following fractional differentiaion operators,
\begin{align}
    \widehat{|\nabla|^s f}&= |\xi|^s \hat{f},\\
    \widehat{\langle \nabla \rangle^s f} &= \langle \xi\rangle^s \hat{f},
\end{align}
with the usual Sobolev norms
\begin{align}
    \norm{f}_{\dot{H}^s(\R)}&= \norm{|\nabla|^s f}_{L^2(\R)},\\
    \norm{f}_{H^s(\R)}&= \norm{\langle \nabla\rangle ^s f}_{L^2(\R)}.
\end{align}
\begin{definition}
    A pair of exponents $(p,q)$ is called \textit{Strichartz-admissible} if
    \begin{align}
        \frac{2}{p}+\frac{1}{q}=\frac{1}{2}, \quad 2\le q\le\infty.
    \end{align}
    For an interval $J$, we define the Strichartz norm
    \begin{align}
        \norm{f}_{S^0(J\times \R)} = \sup_{(p,q) \text{ admissible } } \norm{f}_{L^p_t L^q_x (J\times \R)}.
    \end{align}
\end{definition}
We have the celebrated Strichartz estimates
\begin{lemma}[See \cite{Keel-Tao, GV92, Strichartz77}]
    If $u$ solves on an interval $J=[t_0, t_1]$
    \begin{align}
    \begin{cases}
     i\partial_t u+\Delta u = F(t,x) ,\\
        u(t_0,x)=u_{t_0},   
    \end{cases}    
    \end{align}
    then
    \begin{align}
        \norm{u}_{S^0(J\times \R)} \lesssim \norm{u_{t_0}}_{L^2(\R)} + \norm{F}_{L^{p'}_t L^{q'}_x(J\times \R)}
    \end{align}
    for any $(p',q')$ dual to an admissible pair $(p,q)$.
\end{lemma}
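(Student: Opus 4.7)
The plan is to follow the standard derivation of Strichartz estimates from the dispersive estimate combined with the $TT^*$ argument, then pass to the inhomogeneous retarded version via the Christ–Kiselev lemma.

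First I would split $u$ using Duhamel's formula,
\begin{align}
u(t) = e^{i(t-t_0)\Delta} u_{t_0} - i\int_{t_0}^t e^{i(t-s)\Delta} F(s)\,ds,
\end{align}
so it suffices to estimate $\|e^{i(t-t_0)\Delta} u_{t_0}\|_{L^p_t L^q_x}$ and the Duhamel term separately for every admissible pair $(p,q)$.

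For the homogeneous piece, I would start from the one-dimensional dispersive estimate $\|e^{it\Delta}f\|_{L^\infty_x} \lesssim |t|^{-1/2}\|f\|_{L^1_x}$, interpolate with $L^2$ conservation to get $\|e^{it\Delta}f\|_{L^q_x}\lesssim |t|^{-(1/2)(1-2/q)}\|f\|_{L^{q'}_x}$ for $2\le q\le\infty$, and then run the $TT^*$ argument: the desired homogeneous estimate $\|e^{it\Delta}u_0\|_{L^p_tL^q_x}\lesssim\|u_0\|_{L^2}$ is equivalent (by duality) to
\begin{align}
\Bigl\| \int_\R e^{i(t-s)\Delta} F(s)\,ds \Bigr\|_{L^p_tL^q_x} \lesssim \|F\|_{L^{p'}_tL^{q'}_x},
\end{align}
and this follows by combining the dispersive interpolation bound with the Hardy–Littlewood–Sobolev inequality applied to the time variable, using precisely the admissibility relation $2/p+1/q=1/2$. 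Note that in dimension one the would-be endpoint $(p,q)=(4,\infty)$ causes no trouble, since the critical Keel–Tao obstruction only appears for $d\ge 3$.

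For the inhomogeneous piece, the untruncated integral $\int_\R e^{i(t-s)\Delta}F(s)\,ds$ is handled by applying the homogeneous bound together with its dual, yielding control in the $L^p_tL^q_x$ norm by $\|F\|_{L^{\tilde p'}_tL^{\tilde q'}_x}$ for any two admissible pairs $(p,q)$ and $(\tilde p,\tilde q)$. The retarded integral $\int_{t_0}^t e^{i(t-s)\Delta}F(s)\,ds$ is then recovered from the untruncated one by invoking the Christ–Kiselev lemma, which is applicable whenever $p>\tilde p'$; the remaining diagonal case $p=\tilde p'$ is treated separately using the $TT^*$ identity directly (or noting that in 1D the only admissible pair with $p=p'$ is $p=q=\infty$, handled by mass conservation). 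Taking the supremum over all admissible $(p,q)$ yields the $S^0$ bound.

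The main obstacle is organizational rather than technical: verifying that Christ–Kiselev applies across all choices of admissible pairs and ensuring the endpoint behavior in 1D is handled cleanly, since the duality in the $TT^*$ step must be tracked carefully when $(p,q)$ and $(\tilde p,\tilde q)$ differ. Since this is a classical result and the proof is standard, I would simply cite \cite{Keel-Tao, GV92, Strichartz77} rather than reproducing the full argument.
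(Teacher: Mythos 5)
Your outline is the standard dispersive-estimate/$TT^*$/Christ--Kiselev derivation found in the cited references, and the paper itself gives no proof but simply cites \cite{Keel-Tao, GV92, Strichartz77}, exactly as you ultimately propose to do. The only minor quibble is that your worry about a diagonal case $p=\tilde p'$ is moot: in one dimension every admissible $p$ satisfies $p\ge 4$ while every dual exponent satisfies $\tilde p'\le 4/3$, so Christ--Kiselev always applies.
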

When the interval $J$ is understood, we will sometimes write $S^0$ for $S^0(J\times \R)$. \par 
Let $\varphi(\xi)$ be a smooth radial bump function supported in $|\xi|\le 2$ and equalling 1 on $|\xi|\le 1$. For each $N\in 2^{\Z}$, we define the Littlewood-Paley operators
\begin{align}
    \widehat{P_{\le N} f}(\xi)  &= \varphi(\xi/N) \hat{f}(\xi) \\
    \widehat{P_{> N} f}(\xi)  &= (1-\varphi(\xi/N)) \hat{f}(\xi) \\
    P_{ N} f&= (P_{\le N}-   P_{\le N/2}) f.
\end{align}
We present the following Bernstein/Sobolev estimates
\begin{lemma}
For any $1 \leq p \leq q \leq \infty$ and $s>0$, we have
\begin{align}
\left\|P_{\geq N} f\right\|_{L_x^p} & \lesssim N^{-s}\left\||\nabla|^s P_{\geq N} f\right\|_{L_x^p} \\
\left\|\mid \nabla{ }^s P_{\leq N} f\right\|_{L_x^p} & \lesssim N^s\left\|P_{\leq N} f\right\|_{L_x^p} \\
\left\||\nabla|^{ \pm s} P_N f\right\|_{L_x^p} & \sim N^{ \pm s}\left\|P_N f\right\|_{L_x^p} \\
\left\|P_{\leq N} f\right\|_{L_x^q} & \lesssim N^{\frac{1}{p}-\frac{1}{q}}\left\|P_{\leq N} f\right\|_{L_x^p} \\
\left\|P_N f\right\|_{L_x^q} & \lesssim N^{\frac{1}{p}-\frac{1}{q}}\left\|P_N f\right\|_{L_x^p} .
\end{align}
\end{lemma}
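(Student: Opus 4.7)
The plan is to realize each Littlewood-Paley projector as convolution with a nicely scaled Schwartz kernel, and then deduce every estimate from Young's convolution inequality. Let $\psi(\xi) = \varphi(\xi) - \varphi(2\xi)$ be the annular bump, so that $\widehat{P_N f}(\xi) = \psi(\xi/N)\hat f(\xi)$. If we set $K_N(x) = N \check\varphi(Nx)$ and $\tilde K_N(x) = N \check\psi(Nx)$, then $P_{\le N} f = K_N \ast f$ and $P_N f = \tilde K_N \ast f$, and a scaling change of variable gives
\begin{align}
\|K_N\|_{L^r(\R)} \sim N^{1-\frac{1}{r}}, \qquad \|\tilde K_N\|_{L^r(\R)} \sim N^{1-\frac{1}{r}}
\end{align}
for every $1 \le r \le \infty$, with implicit constants depending only on $\varphi$.

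For the two Bernstein $L^p \to L^q$ estimates, I would apply Young's inequality $\|g \ast h\|_{L^q} \le \|g\|_{L^r} \|h\|_{L^p}$ with $\frac{1}{r} = 1 - (\frac{1}{p} - \frac{1}{q})$, which gives $\|K_N\|_{L^r} \sim N^{1/p - 1/q}$ and hence the desired bound on $\|P_{\le N} f\|_{L^q}$; the argument for $P_N$ is identical with $\tilde K_N$ in place of $K_N$.

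For the derivative estimates, observe that $|\xi|^s \psi(\xi)$ is a smooth compactly supported function, so its inverse Fourier transform $m \in \mathcal{S}(\R)$. A rescaling gives that $|\nabla|^s P_N f = N^s (m_N \ast f)$ with $m_N(x) = N\,m(Nx)$, hence $\||\nabla|^s P_N f\|_{L^p} \lesssim N^s \|m\|_{L^1} \|f\|_{L^p}$; replacing $f$ by a fattened projection $\tilde P_N f$ (whose multiplier equals $1$ on the support of $\psi$) yields the one-sided estimate in terms of $\|P_N f\|_{L^p}$. The reverse inequality is obtained the same way, using that $|\xi|^{-s}\tilde\psi(\xi)$ is smooth and compactly supported away from the origin (so its inverse Fourier transform is Schwartz), giving a convolution kernel that scales as $N^{-s}$. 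The second estimate for $P_{\le N}$ is proved identically: $|\xi|^s \varphi(\xi)$ is merely continuous but it still lies in $L^1$ of the Fourier side after suitable cut-off, and one writes $|\nabla|^s P_{\le N} = |\nabla|^s P_{\le N} \tilde P_{\le N}$ and decomposes $\tilde P_{\le N} = \tilde P_{\le 1} + \sum_{1 \le M \le N} \tilde P_M$; each annular piece contributes $M^s \lesssim N^s$ and the geometric sum is controlled.

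Finally, for the $P_{\ge N}$ estimate, the cleanest route is to use that $m_s(\xi) := (1 - \varphi(\xi))|\xi|^{-s}$ is smooth and satisfies the symbol bound $|\partial^\alpha m_s(\xi)| \lesssim_\alpha |\xi|^{-s-|\alpha|}$, so its rescaled version $m_s(\xi/N)\cdot N^{-s}$ has convolution kernel of $L^1$ norm $\lesssim N^{-s}$. Writing $P_{\ge N} f = N^{-s} [m_s(\cdot/N) \ast |\nabla|^s P_{\ge N} f]$ and applying Young's inequality with $r=1$ yields the claimed bound. The only mildly technical point in the whole lemma is the kernel $L^1$-bound for $m_s(\cdot/N)$, which follows from a standard stationary-phase or scaling argument; the rest is a mechanical application of Young's inequality.
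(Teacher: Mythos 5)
Your proof is correct and is the standard textbook argument; the paper states this lemma as background without proof, so there is nothing to compare against beyond noting that your kernel-plus-Young's-inequality route is exactly the expected one. Two small points of hygiene: for the $L^p\to L^q$ estimates, Young applied to $K_N\ast f$ gives $\|f\|_{L^p}$ on the right rather than $\|P_{\le N}f\|_{L^p}$, so you should insert the same fattened projector $\tilde P_{\le N}$ (with $\tilde\varphi\equiv 1$ on $\supp\varphi$) that you already use for the derivative bounds; and the $L^1$ bound for the kernel of $(1-\varphi(\xi))|\xi|^{-s}$ is not a stationary-phase fact but follows from the dyadic decomposition $\sum_{j\ge 0} 2^{-js}<\infty$ that you already deploy for $|\nabla|^s P_{\le N}$.
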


We collect important properties of the $I$-operator in the following lemma.
\begin{lemma}[See \cite{CHVZ08}]
    Let $1<p<\infty$ and $0\le \sigma\le s \le 1$. Then, 
    \begin{align}
        \norm{If}_p &\lesssim \norm{f}_p, \\
        \norm{|\nabla|^\sigma P_{>N} f}_p &\lesssim N^{\sigma-1} \norm{\nabla I f}_p, \\
        \norm{u}_{\dot{H}^s} \lesssim \norm{Iu}_{\dot{H}^1} &\lesssim N^{1-s} \norm{u}_{\dot{H}^s}. 
    \end{align}
\end{lemma}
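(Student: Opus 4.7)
The plan is to verify all three estimates at the level of Fourier symbols and then invoke either Plancherel's theorem (for the $L^2$-based estimate (3)) or the Mikhlin--Hörmander multiplier theorem (for the $L^p$ estimates (1) and (2)). In particular, everything reduces to checking pointwise bounds on the symbols $m(\xi)$, $|\xi|^\sigma(1-\varphi(\xi/N))/(i\xi\,m(\xi))$, and $|\xi| m(\xi)$ in the three regimes $|\xi|\le N$, $N \le |\xi| \le 2N$, and $|\xi| \ge 2N$.

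For estimate (1), I would check that $m$ satisfies the Mikhlin--Hörmander condition $|\xi|^k |\partial^k m(\xi)| \lesssim_k 1$ uniformly in $N$. This is immediate where $m\equiv 1$, and on $|\xi|\ge 2N$ one computes $|\xi|^k |\partial^k m| \sim m(\xi) \le 1$ since $m(\xi) = N^{1-s}|\xi|^{s-1}$. In the transition zone $N \le |\xi|\le 2N$, derivatives of the smooth interpolant produce factors of $N^{-k}$ which are balanced by $|\xi|^k\sim N^k$. Mikhlin--Hörmander then yields $\|If\|_p\lesssim \|f\|_p$ for $1 < p < \infty$.

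For estimate (2), writing $g = \nabla I f$, I would reduce the problem to showing that the symbol
\begin{align}
T(\xi) = \frac{|\xi|^\sigma (1-\varphi(\xi/N))}{i\xi\, m(\xi)}
\end{align}
defines a Fourier multiplier of $L^p$-norm $\lesssim N^{\sigma-1}$. On the support of $T$, i.e., $|\xi| \gtrsim N$, one has $|\xi m(\xi)| \sim N^{1-s}|\xi|^s$, so pointwise $|T(\xi)| \sim N^{s-1}|\xi|^{\sigma-s} \le N^{s-1}N^{\sigma-s}=N^{\sigma-1}$ thanks to the hypothesis $\sigma\le s$. Checking that $|\xi|^k|\partial^k T(\xi)|$ obeys the same bound (the differentiations act either on $|\xi|^\sigma/m(\xi)$, producing the same scaling, or on the cutoff $1-\varphi(\xi/N)$, producing $N^{-k}\sim |\xi|^{-k}$ on its support) and applying Mikhlin--Hörmander gives the claim.

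For estimate (3), Plancherel reduces both inequalities to pointwise comparisons between $|\xi|^{2s}$ and $|\xi|^2 m(\xi)^2$. On $|\xi|\le N$ we have $|\xi|^2 m(\xi)^2 = |\xi|^2 \le N^{2(1-s)}|\xi|^{2s}$, while on $|\xi|\ge 2N$ we have equality $|\xi|^2 m(\xi)^2 = N^{2(1-s)}|\xi|^{2s}$, which together give the upper bound $\|Iu\|_{\dot H^1}\lesssim N^{1-s}\|u\|_{\dot H^s}$. The lower bound $\|u\|_{\dot H^s}\lesssim \|Iu\|_{\dot H^1}$ follows by the same regime analysis (using $N \ge 1$ and $s \le 1$). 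The main technical point throughout is verifying the uniformity in $N$ of the Mikhlin bounds in (1) and (2); the estimates are delicate precisely because $m$ is defined by a scaling with $N$, and one must confirm that the $k$-th derivative on each region contributes exactly the factor $|\xi|^{-k}$ required by the Mikhlin condition, with no $N$-loss. Once this is in hand, all three estimates are straightforward consequences of the symbol comparisons described above.
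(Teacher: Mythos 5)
The paper does not prove this lemma; it is quoted verbatim from \cite{CHVZ08}, so there is no in-paper argument to compare against. Your symbol-level approach --- Mikhlin--H\"ormander for the two $L^p$ statements and Plancherel for the $L^2$-based one --- is the standard proof of these $I$-operator properties, and your verification of the Mikhlin condition for $m$ (uniformly in $N$) and of the bound $|T(\xi)|\lesssim N^{\sigma-1}$ on $\supp(1-\varphi(\cdot/N))$ using $\sigma\le s$ and $|\xi|\gtrsim N$ is correct. The upper bound $\norm{Iu}_{\dot H^1}\lesssim N^{1-s}\norm{u}_{\dot H^s}$ is also handled correctly via the pointwise comparison $|\xi|^2m(\xi)^2\le N^{2(1-s)}|\xi|^{2s}$.

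The one place where ``the same regime analysis'' does not actually close is the first inequality of (3), $\norm{u}_{\dot H^s}\lesssim\norm{Iu}_{\dot H^1}$. There the required pointwise bound is $|\xi|^{2s}\lesssim|\xi|^2m(\xi)^2$; this holds for $1\le|\xi|\le N$ (since $s\le1$) and for $|\xi|\ge 2N$ (since $N\ge1$), but it \emph{fails} for $|\xi|<1$, where $m\equiv1$ and $|\xi|^{2s}\ge|\xi|^2$ with unbounded ratio as $\xi\to0$. So the inequality is false as stated for purely homogeneous norms; the correct statement (and the one actually used in the I-method literature, including how this paper uses it, e.g.\ $\norm{u_\lambda(t)}_{\dot H^s_x}\lesssim E(Iu_\lambda(t))$ together with mass conservation) requires either inhomogeneous norms, a restriction to frequencies $|\xi|\gtrsim1$, or supplementing $\norm{Iu}_{\dot H^1}$ with $\norm{u}_{L^2}$ to control the low frequencies. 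You should either make that restriction explicit or replace $|\xi|$ by $\langle\xi\rangle$ in the low-frequency comparison; as written, that single step is a genuine gap, albeit one inherited from a common abuse of notation in the statement itself.
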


\section{Bilinear Estimates} \label{Bilinear Estimates}
In this section, we collect some bilinear estimates that will be used in the rest of the paper. 

We first state the classical bilinear estimate:
\begin{lemma}[See \cite{KT07, Tao2006}]
Let $J$ be an interval, and $u_1, u_2$ be two solutions to \eqref{NLS} such that $u_1$ is supported on $|\xi|\sim M$ and $u_2$ on $|\xi|\sim N$ with the two separated in frequency space by at least $cN$. Then, 
    \begin{align}
        \norm{u_1 u_2}_{L^{2}_{t,x}(J\times \R)} \lesssim \frac{1}{N^{1/2}} \norm{u_1}_{S^0_*(J\times \R)}  \norm{u_2}_{S^0_*(J\times \R)},
    \end{align}
    where
    \begin{align}
        \norm{u}_{S^0_*(J\times \R)} = \norm{u_0}_{L^2(\R)} + \norm{(i\partial_t + \Delta)u}_{L^{1}_t L^{2}_x(J\times \R)}.
    \end{align}
\end{lemma}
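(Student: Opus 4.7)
The plan is to reduce to the case where $u_1, u_2$ are free Schr\"odinger solutions via Duhamel's formula, and then to handle the free case by a space-time Fourier analysis that exploits the prescribed frequency separation.

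For the free case, suppose $u_j(t) = e^{it\Delta} f_j$ with $\widehat{f}_j$ supported in the stated frequency annulus. The space-time Fourier transform of each $u_j$ is a measure supported on the paraboloid $\tau = -|\xi|^2$, so convolving gives
\begin{align}
\widehat{u_1 u_2}(\tau, \xi) = \int \widehat{f}_1(\xi_1)\,\widehat{f}_2(\xi-\xi_1)\,\delta\!\bigl(\tau + |\xi_1|^2 + |\xi-\xi_1|^2\bigr)\,d\xi_1.
\end{align}
A direct computation shows the derivative of $\xi_1 \mapsto |\xi_1|^2 + |\xi - \xi_1|^2$ equals $2(\xi_1 - \xi_2)$ with $\xi_2 := \xi-\xi_1$, whose absolute value is $\sim N$ under the frequency-separation hypothesis. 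Applying Cauchy-Schwarz to the delta integral therefore produces a factor of $N^{-1}$, and then integrating in $(\tau,\xi)$ and invoking Plancherel yields $\norm{u_1 u_2}_{L^2_{t,x}} \lesssim N^{-1/2}\norm{f_1}_{L^2_x}\norm{f_2}_{L^2_x}$.

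For the general case, apply Duhamel's formula to each solution,
\begin{align}
u_j(t) = e^{i(t-t_0)\Delta} u_j(t_0) + i\int_{t_0}^t e^{i(t-s)\Delta} F_j(s)\,ds, \qquad F_j := (i\partial_t + \Delta)u_j,
\end{align}
where each $F_j$ inherits the frequency support of $u_j$. Expanding the product $u_1 u_2$ produces four terms. In each term containing a Duhamel integral, use Minkowski's inequality to move the $L^2_{t,x}$ norm inside the time integral; each resulting slice is a product of free solutions to which the bound of the previous paragraph applies (with the relevant $f_j$ replaced either by $u_j(t_0)$ or by a time slice $F_j(s)$). Assembling the four pieces produces the factorized bound $(\norm{u_1(t_0)}_{L^2} + \norm{F_1}_{L^1_t L^2_x})(\norm{u_2(t_0)}_{L^2} + \norm{F_2}_{L^1_t L^2_x}) = \norm{u_1}_{S^0_*}\norm{u_2}_{S^0_*}$.

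The genuinely substantive step is the free-case Fourier-side computation, and the only real subtlety there is ensuring the Jacobian bound $|\xi_1 - \xi_2| \sim N$ truly holds on the joint support; this is exactly what the separation hypothesis guarantees. The reduction from general solutions via Duhamel plus Minkowski is essentially bookkeeping, and frequency localization is preserved throughout because Littlewood-Paley projections commute with $e^{it\Delta}$.
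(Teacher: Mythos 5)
Your proof is correct, but note that the paper does not actually prove this lemma: it is stated with a citation to the literature (Kenig--Tao type bilinear estimates; see the references given there), and the only bilinear estimate the paper proves itself is the interpolated $L^{p_\delta}$ variant of Lemma 3.2, obtained by interpolating the present $L^2$ bound against an $L^4$ bound from Strichartz. Your argument is the standard proof from those references: the space-time Fourier support on the paraboloid, Cauchy--Schwarz against the measure $\delta\bigl(\tau+|\xi_1|^2+|\xi-\xi_1|^2\bigr)\,d\xi_1$ with Jacobian $2|\xi_1-\xi_2|\gtrsim N$ guaranteed by the separation hypothesis, and the reduction of inhomogeneous solutions to the free case via Duhamel and Minkowski, which is exactly what the $S^0_*$ norm is designed to absorb. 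The one point worth stating explicitly if you write this up is that the lemma as stated concerns the product $u_1u_2$ rather than $u_1\overline{u_2}$; in the conjugated case the relevant Jacobian becomes $|\xi_1+\xi_2|$, so one should check that the frequency configuration in each application still yields the factor $N^{-1/2}$ (in this paper the estimate is always applied with one factor at much lower frequency, where both quantities are comparable to $N$, so there is no issue).
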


We use the following bilinear estimate which may be proved by interpolation combined with the original one. 
\begin{lemma}[See \cite{Dod16} for the case $\delta=1/2$]\label{lem Bilinear}
    Let $0\le \delta\le 1$, $p_\delta = \frac{4}{2-\delta}$, $J$ be an interval, and $u_1, u_2$ be two solutions to \eqref{NLS} such that $u_1$ is supported on $|\xi|\sim M$ and $u_2$ on $|\xi|\sim N$ with the two separated in frequency space by at least $cN$. Then, 
    \begin{align}
        \norm{u_1 u_2}_{L^{p_\delta}_{t,x}(J\times \R)} \lesssim \frac{M^{\delta/4}}{N^{1/2-\delta/2}} \norm{u_1}_{S^0_*(J\times \R)}  \norm{u_2}_{S^0_*(J\times \R)},
    \end{align}
    where
    \begin{align}
        \norm{u}_{S^0_*(J\times \R)} := \norm{u_0}_{L^2(\R)} + \norm{(i\partial_t + \Delta)u}_{L^{1}_t L^{2}_x(J\times \R)}.
    \end{align}
\end{lemma}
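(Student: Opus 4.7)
The plan is to prove the estimate by real interpolation between two endpoints: the classical bilinear estimate at $\delta=0$ (which is exactly the $L^2_{t,x}$ estimate already stated) and a new $L^4_{t,x}$ endpoint at $\delta=1$. The powers of $M$ and $N$ in the conclusion are exactly consistent with this interpolation, since $\tfrac{1}{p_\delta} = \tfrac{2-\delta}{4} = \tfrac{1-\delta}{2} + \tfrac{\delta}{4}$, so the target space interpolates $L^2_{t,x}$ (weight $1-\delta$) with $L^4_{t,x}$ (weight $\delta$).

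For the $\delta=1$ endpoint I would argue as follows. The pair $(8,4)$ is Strichartz-admissible in one dimension ($\tfrac{2}{8} + \tfrac{1}{4} = \tfrac{1}{2}$), and its dual is $(8/7, 4/3)$; the definition of $S^0_*$ together with the inhomogeneous Strichartz estimate (using that $(\infty,2)$ is admissible so its dual $(1,2)$ can appear on the right) yields
\begin{align}
\norm{u_i}_{L^8_t L^4_x(J\times\R)} \lesssim \norm{u_i}_{S^0_*(J\times\R)}, \qquad i=1,2.
\end{align}
Since $u_1$ is Fourier-supported on $|\xi|\sim M$, Bernstein's inequality upgrades the spatial norm on $u_1$:
\begin{align}
\norm{u_1}_{L^8_t L^\infty_x(J\times\R)} \lesssim M^{1/4}\norm{u_1}_{L^8_t L^4_x(J\times\R)} \lesssim M^{1/4}\norm{u_1}_{S^0_*(J\times\R)}.
\end{align}
Hölder's inequality in both time and space (combining $L^8_t L^\infty_x$ with $L^8_t L^4_x$) then gives the desired endpoint
\begin{align}
\norm{u_1 u_2}_{L^4_{t,x}(J\times\R)} \lesssim M^{1/4}\, \norm{u_1}_{S^0_*(J\times\R)}\norm{u_2}_{S^0_*(J\times\R)}.
\end{align}

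To conclude, I would apply the standard $L^p$ interpolation inequality $\norm{f}_{L^{p_\delta}_{t,x}}\le \norm{f}_{L^2_{t,x}}^{1-\delta}\norm{f}_{L^4_{t,x}}^{\delta}$ to $f = u_1 u_2$. Combining this with the classical $L^2_{t,x}$ bound, which contributes $N^{-(1-\delta)/2}$, and the $L^4_{t,x}$ endpoint, which contributes $M^{\delta/4}$, produces the stated constant $\frac{M^{\delta/4}}{N^{1/2-\delta/2}}$. I do not anticipate a serious obstacle here: the only subtlety is verifying the correct interpolation weights match the exponent $p_\delta = 4/(2-\delta)$, and checking that the inhomogeneous forcing in the $S^0_*$ norm is compatible with the pair $(8,4)$ via dual Strichartz—both are routine. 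The substantive input is the frequency separation, which is used inside the classical $L^2_{t,x}$ bilinear estimate at the $\delta=0$ endpoint, and the frequency localization of $u_1$, which is used in Bernstein at the $\delta=1$ endpoint; after that, the interpolation is purely a Hölder computation.
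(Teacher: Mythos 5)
Your proposal is correct and follows essentially the same route as the paper: interpolate $L^{p_\delta}_{t,x}$ between the classical $L^2_{t,x}$ bilinear estimate (contributing $N^{-(1-\delta)/2}$) and an $L^4_{t,x}$ bound obtained from H\"older, Strichartz, and Bernstein on the $M$-localized factor (contributing $M^{\delta/4}$). The only cosmetic difference is the H\"older split at the $L^4$ endpoint — you use $(8,\infty)\times(8,4)$ where the paper uses $(4,\infty)\times(\infty,4)$ — which yields the same constant.
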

\begin{proof}[Proof of Lemma \ref{lem Bilinear}]
    We prove the result for solutions $u, v$ to the linear equation, as the extension to solutions of the nonlinear equation is standard (see for example Lemma 3.4 in \cite{CKSTT081}). By interpolation, Sobolev embedding, and Strichartz for the pairs $(4, \infty)$ and $(\infty, 2$), we have that 
    \begin{align}
        \norm{(e^{it\Delta}u_0) (e^{it\Delta}v_0)}_{p_\delta, p_\delta} &\le  \norm{(e^{it\Delta}u_0) (e^{it\Delta}v_0)}_{2, 2}^{1-\delta}\norm{(e^{it\Delta}u_0) (e^{it\Delta}v_0)}_{4,4}^{\delta} \\
        &\lesssim N^{-1/2+\delta/2} \norm{u_0}_{L^2}^{1-\delta} \norm{v_0}_{L^2}^{1-\delta} \norm{e^{it\Delta}v_0}_{4, \infty}^\delta\norm{e^{it\Delta}u_0}_{\infty, 4}^\delta \\
        &\lesssim N^{-1/2+\delta/2} \norm{u_0}_{L^2}^{1-\delta} \norm{v_0}_{L^2}^{1-\delta} \norm{v_0}_{L^2}^{\delta} M^{\delta/4} \norm{u_0}_{L^2}^{\delta } \\
        &\lesssim \frac{M^{\delta/4}}{N^{1/2-\delta/2}}\norm{u_0}_{L^2}\norm{v_0}_{L^2}.
    \end{align}
\end{proof}

\section{Local Well-Posedness}\label{Local Well-Posedness}
We prove a local well-posedness result we will frequently use in the following sections showing that the Strichartz norms are controlled on intervals where the Morawetz norm of $Iu$ is small.
\begin{lemma}\label{lem LWP}
    If $J$ is an interval with $\norm{Iu}_{L^{10}_{t,x} (J\times \R)} \le \e$, $\e$ small, and $E(Iu_0)\le 1$, $N$ large enough and $s>1/6$, then 
    \begin{align}
        \norm{\langle \nabla \rangle I u}_{S^0(J\times\R)} \lesssim 1.
    \end{align}
\end{lemma}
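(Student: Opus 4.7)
The plan is to run a standard continuity/bootstrap argument on the quantity $X(T) := \|\langle\nabla\rangle Iu\|_{S^0([t_0,T]\times\R)}$, where $[t_0,T] \subset J$. The equation for $\langle\nabla\rangle Iu$ is
\begin{align}
i\partial_t(\langle\nabla\rangle Iu) + \Delta(\langle\nabla\rangle Iu) = \langle\nabla\rangle I(|u|^6 u),
\end{align}
so by the Strichartz estimate of Section~\ref{Preliminaries} applied to a suitable dual admissible pair $(p',q')$,
\begin{align}
X(T) \lesssim \|\langle\nabla\rangle Iu_0\|_{L^2_x} + \|\langle\nabla\rangle I(|u|^6 u)\|_{L^{p'}_t L^{q'}_x([t_0,T]\times\R)}.
\end{align}
The data term is controlled by $\|Iu_0\|_{L^2_x} + \|\nabla Iu_0\|_{L^2_x} \lesssim \|u_0\|_{L^2_x} + E(Iu_0)^{1/2} \lesssim 1$, using $m(\xi)\le 1$ together with the global mass bound and the hypothesis $E(Iu_0)\le 1$.

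Next, I would estimate the nonlinear term using a fractional Leibniz/Kato--Ponce-type estimate adapted to the $I$-operator, which places the single $\langle\nabla\rangle I$ on one factor of $u$ and leaves the remaining six factors bare, giving a bound of schematic form
\begin{align}
\|\langle\nabla\rangle I(|u|^6 u)\|_{L^{p'}_t L^{q'}_x} \lesssim \|\langle\nabla\rangle Iu\|_{L^{a}_t L^{b}_x}\,\|u\|^6_{L^{c}_t L^{d}_x},
\end{align}
where $(a,b)$ is Strichartz-admissible and the Hölder relations $\tfrac{1}{p'} = \tfrac{1}{a}+\tfrac{6}{c}$, $\tfrac{1}{q'} = \tfrac{1}{b}+\tfrac{6}{d}$ are satisfied. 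The key step is then to interpolate $\|u\|_{L^{c}_t L^{d}_x}$ between the Morawetz norm $\|Iu\|_{L^{10}_{t,x}}$ and an admissible Strichartz norm of $\langle\nabla\rangle Iu$ (Sobolev embedding is used to relate $\|u\|_{L^{10}_x}$ to $\||\nabla|^{1/6}u\|_{L^{15/4}_x}$, noting that $(60/7,\,15/4)$ is admissible). Choosing the exponents so that $\|Iu\|_{L^{10}_{t,x}}$ appears to a strictly positive power $\alpha>0$ while the remaining factors are controlled by $X(T)$ to some power $1+\theta$, we obtain
\begin{align}
X(T) \le C_0 + C_1\,\e^{\alpha}\, X(T)^{1+\theta}.
\end{align}

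The bootstrap then closes: if $T$ is the supremum of times where $X(T)\le 2C_0$, continuity of $T\mapsto X(T)$ combined with the above inequality forces $T = \sup J$ provided $\e$ is sufficiently small, yielding $\|\langle\nabla\rangle Iu\|_{S^0(J\times\R)}\lesssim 1$. The main obstacle I expect is the exponent bookkeeping: the norm $L^{10}_{t,x}$ is not itself Strichartz-admissible in one dimension, so one must carefully interpolate through $L^{60/7}_t L^{10}_x$ (or a similar intermediate space) and invoke $s > s_c = 1/6$ to ensure the gained power $\alpha$ on the smallness factor is strictly positive. The hypothesis that $N$ be large enough enters through the standard $I$-method smoothing estimates that replace $\|u\|_{\dot H^s}$ by $N^{s-1}\|\langle\nabla\rangle Iu\|_{L^2}$ without loss, ensuring any commutator error from distributing $\langle\nabla\rangle I$ across the nonlinearity is absorbable.
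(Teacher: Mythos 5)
Your proposal follows essentially the same route as the paper: Duhamel plus Strichartz (the paper uses the dual pair $(4/3,1)$), a Leibniz-type step placing $\langle \nabla \rangle I$ on one factor, and the six remaining bare factors of $u$ measured in $L^{12}_t L^6_x$, split into low frequencies (interpolated between $L^\infty_t L^2_x$ and the small $L^{10}_{t,x}$ Morawetz norm of $Iu$, which produces the positive power of $\e$) and high frequencies (Bernstein, producing $N^{-5/6}$ per factor), before a standard continuity argument closes the bound. The exponent bookkeeping you defer is precisely what the paper carries out, so your outline is correct and matches the paper's argument.
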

\begin{proof}[Proof of Lemma \ref{lem LWP}]
    In the following proof, the mixed-space times norms are all taken with the same interval $J$. Strichartz estimates, Duhamel's formula and Hölder's inequality gives
    \begin{align}
        \norm{\jnabla I u}_{S^0} &\lesssim  \norm{\jnabla Iu_0}_{L^2} + \norm{\jnabla I (|u|^6 u)}_{4/3,1} \\
        &\le \norm{\jnabla Iu_0}_{L^2} + \norm{\jnabla Iu}_{4, \infty} \norm{u}^6_{12,6} \\
        &\le \norm{\jnabla Iu_0}_{L^2} + \norm{\jnabla Iu}_{S^0} \norm{u}^6_{12,6}.
    \end{align}
We now consider separately $\norm{P_{\le N} u}_{12,6}$ and $\norm{P_{> N} u}_{12,6}$. For the low frequencies, we use interpolation to obtain
\begin{align}
    \norm{P_{\le N} u}_{12,6} &\le \norm{P_{\le N} Iu}_{\infty, 2}^{1/6} \norm{P_{\le N} I u}_{10,10}^{5/6} \\
    &\le \norm{\jnabla I u}_{S^0}^{1/6} \e^{5/6}.
\end{align}
For the high frequencies, we use Bernstein with a frequency decomposition to show that
\begin{align}
    \norm{P_{> N} u}_{12,6} &\le \sum_{N_k> N} \norm{P_{N_k} u}_{12,6} \\
    &\le \sum_{N_k> N} \frac{\langle N_k\rangle^{1-s}}{N^{1-s}} \norm{P_{N_k} Iu}_{12,6} \\
    &=  \sum_{N_k> N} \frac{\langle N_k\rangle^{-s}}{N^{1-s}} \norm{P_{N_k} \jnabla Iu}_{12,6} \\
    &\lesssim \sum_{N_k> N} \frac{\langle N_k\rangle^{-s}}{N^{1-s}} N_k^{1/6} \norm{P_{N_k} \jnabla Iu}_{12,3} \\
    &\lesssim\norm{\jnabla I u}_{S^0} \sum_{N_k> N} \frac{\langle N_k\rangle^{1/6-s}}{N^{1-s}} \\
    &\lesssim \norm{\jnabla I u}_{S^0} N^{-5/6}.
\end{align}
Thus, 
\begin{align}
    \norm{\jnabla I u}_{S^0} \lesssim \norm{\jnabla Iu_0}_{L^2} + \e^5 \norm{\jnabla I u}_{S^0}^2 + N^{-5} \norm{\jnabla I u}_{S^0}^7.
\end{align}
By a standard continuity argument, for $\e$ small enough and $N$ large enough,
\begin{align}
\norm{\jnabla I u}_{S^0 (J \times \R)} \lesssim 1.
\end{align}
This finishes the proof of Lemma \ref{lem LWP}.
\end{proof}

\section{Energy Increment}\label{Energy Increment}
We now control the energy increment from above on intervals $J$ where the Morawerz norm $\norm{Iu}_{L^{10}_{t,x} (J\times \R)}$ is small.
\begin{theorem}\label{Energy Increment Theorem}
    If $J$ is an interval such that $\norm{Iu}_{L^{10}_{t,x} (J\times \R)} \le \e$,  and $s>1/3$, then
    \begin{align}
        \sup_{t_1, t_2\in J} |E(Iu(t_1))-E(Iu(t_2))| \lesssim \frac{1}{N^5} + \frac{\norm{P_{>cN} \langle \nabla \rangle Iu}_{L^4_t L_x^\infty(J\times\R)}}{N^{3/2-}}.
    \end{align}
\end{theorem}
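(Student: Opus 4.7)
My approach is to express the energy increment as a space-time multilinear integral in eight copies of $u$ encoding the non-commutation of the $I$-operator with the nonlinearity, then estimate the resulting form by a dyadic case analysis. Since $i\partial_t Iu + \Delta Iu = I(|u|^6 u)$, a direct differentiation of $E(Iu)$ and integration by parts (with the usual NLS energy cancellation) yields
\begin{align}
E(Iu(t_2)) - E(Iu(t_1)) = \im \int_{t_1}^{t_2} \int_{\R} \bigl(|Iu|^6 Iu - \Delta Iu\bigr)\,\overline{\,I(|u|^6 u) - |Iu|^6 Iu\,}\, dx\, dt.
\end{align}
On the Fourier side, the commutator $I(|u|^6 u) - |Iu|^6 Iu$ carries a multiplier of the form $m(\xi_1 + \cdots + \xi_7) - m(\xi_1)\cdots m(\xi_7)$, which vanishes whenever every input frequency satisfies $|\xi_j| \le N$.

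The heart of the argument is then a Littlewood--Paley analysis of the resulting eight-linear form. I would decompose each of the seven factors of the commutator via $P_{N_j}$, ordered so that $N_1 \ge \cdots \ge N_7$; only configurations with $N_1 \gtrsim N$ contribute. In the diagonal regime $N_1 \sim N$, no distinguished $P_{>cN}\jnabla Iu$ factor can be extracted, so I would instead apply H\"older's inequality using five copies of the Morawetz smallness $\norm{Iu}_{L^{10}_{t,x}(J\times \R)} \le \e$ against the $\jnabla Iu$ Strichartz norms supplied by Lemma \ref{lem LWP} for the remaining factors; combined with the symbol bound and the dyadic summation over the low-frequency factors, this yields the $N^{-5}$ term, provided $s > 1/3$ renders the inner sums summable. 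In the unbalanced regime $N_1 \gg N$, I would peel off $P_{N_1}Iu$ in $L^4_tL^\infty_x$, absorbing the single available derivative so that this factor becomes $\norm{P_{>cN}\jnabla Iu}_{L^4_tL^\infty_x}$; for the remaining six factors, the bilinear estimate of Lemma \ref{lem Bilinear} at a tuned exponent $\delta$ applied to a well-separated high--low pair, combined with H\"older against the Morawetz smallness and the Strichartz bounds from Lemma \ref{lem LWP} for the rest, should produce the desired gain $N^{-3/2-}$ after summing dyadic scales $N_1 \gtrsim N$.

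The $\Delta Iu$ factor in the identity is handled by the same dyadic dichotomy, with the two derivatives of $\Delta$ effectively converted into a single net derivative after one of them is absorbed onto the high-frequency factor pushed to $L^4_tL^\infty_x$ (again producing the weight $\norm{P_{>cN}\jnabla Iu}_{L^4_tL^\infty_x}$). The main obstacle will be in the unbalanced regime: one must balance the bilinear gain $N_1^{-1/2+\delta/2}N_2^{\delta/4}$ from Lemma \ref{lem Bilinear} against the I-multiplier bound $\lesssim (N/N_1)^{1-s}$ and the Bernstein cost of bringing one factor into $L^4_tL^\infty_x$, so that the resulting dyadic sum in $N_1 \gtrsim N$ converges to exactly $N^{-3/2-}$ rather than to the naive $N^{-1}$ one would get from a single I-gain. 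The hypothesis $s>1/3$ is precisely the regularity at which these summability constraints close.
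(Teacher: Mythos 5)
Your overall skeleton — writing the increment as an eight-linear commutator form, Littlewood--Paley decomposing the seven commutator factors, and splitting into a balanced case and a case where a bilinear estimate applies — is the same as the paper's, and your starting identity is correct (the extra term $|Iu|^6Iu-I(|u|^6u)$ versus the paper's $I(|u|^6u)$ only changes the integrand by $\im$ of $|{\rm commutator}|^2$, which vanishes). However, there is a genuine gap in where you locate the two terms of the conclusion. Five copies of the Morawetz smallness $\norm{Iu}_{L^{10}_{t,x}}\le\e$ produce $\e^5$, a constant independent of $N$ — this cannot yield the $N^{-5}$ term. In this proof the $L^{10}_{t,x}$ smallness is used only once, through Lemma \ref{lem LWP}, to guarantee $\norm{\jnabla Iu}_{S^0(J\times\R)}\lesssim 1$; every power of $N^{-1}$ in the conclusion must come from the symbol bound, the $I$-operator weights $N_j^{-s}N^{-(1-s)}$ on factors at frequency $N_j>N$, or the bilinear gain. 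The $N^{-5}$ term actually arises from the regime in which \emph{all} seven commutator frequencies are $\gtrsim N$ and mutually comparable (so there is no frequency separation and the bilinear estimate gives nothing); there one estimates everything in $L^8_{t,x}$ via Strichartz and Sobolev and harvests six copies of the $I$-weight, giving $N^{1/4-3s}\cdot N^{3s-21/4}=N^{-5}$ for $s>1/4$.

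Relatedly, your dichotomy $N_1\sim N$ versus $N_1\gg N$ is not the right one, and the claim that no $P_{>cN}\jnabla Iu$ factor can be extracted when $N_1\sim N$ is backwards: any factor at frequency $\gtrsim N$ can be placed in $L^4_tL^\infty_x$ to produce the weight $\norm{P_{>cN}\jnabla Iu}_{L^4_tL^\infty_x}$, and indeed the paper's Case 2 (which contributes to the $N^{-3/2-}$ term) includes $N_1\sim N_2\sim N$. The correct split is between configurations containing at least one commutator factor at frequency $\ll N_{\max}$ — where the bilinear estimate of Lemma \ref{lem Bilinear} applied to a separated high--low pair, together with the multiplier bound and one $L^4_tL^\infty_x$ factor, gives $N^{-3/2-}$ — and the fully balanced configuration described above, which gives $N^{-5}$. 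Finally, your sketch only addresses the $\Delta Iu$ copy of the non-commutator factor; you also need to show that the septic factor obeys bounds of the form $\norm{P_{N_1}I(|u|^6u)}_{L^4_tL^\infty_x},\ \norm{P_{N_1}I(|u|^6u)}_{L^\infty_tL^2_x}\lesssim N_1$ (modulo $\norm{\jnabla Iu}_{S^0}$) so that the same dyadic analysis applies to it, which is a separate multilinear estimate.
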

\begin{proof}[Proof of Theorem \ref{Energy Increment Theorem}]

Note that we use our $\e$-upper bound on the $L^{10}_{t,x}$ norm to obtain from the local-well-posedness result that $\norm{\langle \nabla \rangle I u}_{S^0(J\times\R)} \lesssim 1$.

Just as 
\begin{align}
    \frac{d}{dt} E(u)= \operatorname{Re}\int \overline{\partial_t u} (u|u|^6 -\Delta u)\,dx =  \operatorname{Re}\int \overline{u_t} (u|u|^6 -\Delta u-i\partial_t u)\,dx,
\end{align}
we calculate that
\begin{align}
    \frac{d}{dt} E(Iu) &=  \operatorname{Re}\int \overline{I\partial_t} u(Iu|Iu|^6 -\Delta Iu - i\partial_t Iu)\,dx \\
    &= \operatorname{Re}\int \overline{I\partial_t u}(Iu|Iu|^6 - I(u|u|^6))\,dx.
\end{align}
Hence, by Plancherel and integration in time over $J$, 
\begin{align}
    \sup_{t_1, t_2\in J} |E(Iu(t_1))-E(Iu(t_2))| \lesssim K+P,
\end{align}
where 
\begin{align}\label{Kinetic}
\begin{aligned}
K &:= \left|\int_J \int_{\sum_{j=1}^8 \xi_j=0} |\xi_1|^2 \widehat{\overline{Iu}}(t, \xi_1)\left[1-\frac{m(\xi_2 + \cdots + \xi_8)}{m(\xi_2) \cdots m(\xi_8)}\right] \widehat{\overline{Iu}}(t, \xi_2)\widehat{Iu}(t, \xi_3) \cdots \right. \\
    &\times \left. \widehat{\overline{Iu}}(t, \xi_7)\widehat{Iu}(t, \xi_8) d\xi\,dt \vphantom{\int_J} \right|    
\end{aligned}
\end{align}
and 
\begin{align}\label{Potential}
\begin{aligned}
    P&:= \left|\int_J \int_{\sum_{j=1}^8 \xi_j=0}  \widehat{\overline{I(u|u|^6)}}(t, \xi_1)\left[1-\frac{m(\xi_2 + \cdots + \xi_8)}{m(\xi_2) \cdots m(\xi_8)}\right] \widehat{\overline{Iu}}(t, \xi_2)\widehat{Iu}(t, \xi_3) \cdots \right. \\
    &\times \left. \widehat{\overline{Iu}}(t, \xi_7)\widehat{Iu}(t, \xi_8) d\xi\,dt \vphantom{\int_J} \right|
\end{aligned}
\end{align}
We will estimate \eqref{Kinetic} and \eqref{Potential} using a Littlewood-Paley decomposition. To this end, let 
\begin{align}
    u= \sum_{N\in 2^{\Z}} P_N u.
\end{align}
We notice that by symmetry, and with $N_j$ dyadic,
\begin{align}    \eqref{Kinetic} \lesssim \sum_{N_2\ge N_3 \ldots \ge N_8} B(N_1, \cdots, N_8)
\end{align}
with
\begin{align}
    B(N_1, \ldots, N_8) &:=\left|\int_J \int_{\sum_{j=1}^8 \xi_j=0} |\xi_1|^2 \widehat{\overline{Iu_{N_1}}}(t, \xi_1)\left[1-\frac{m(\xi_2 + \cdots + \xi_8)}{m(\xi_2) \cdots m(\xi_8)}\right] \widehat{\overline{Iu_{N_2}}}(t, \xi_2)\widehat{Iu_{N_3}}(t, \xi_3) \cdots \right. \\
    &\times \left. \widehat{\overline{Iu_{N_7}}}(t, \xi_7)\widehat{Iu_{N_8}}(t, \xi_8) d\xi\,dt \vphantom{\int_J} \right|.
\end{align}
For each term, we break up into cases as necessary. \newline 
\textbf{The term \eqref{Kinetic}}
\newline
\textbf{Case 1}: $N_2\ll N$. This case contributes zero as
\begin{align}
    1-\frac{m(\xi_2 + \cdots + \xi_8)}{m(\xi_2) \cdots m(\xi_8)}= 1-\frac{1}{1}=0.
\end{align}
\newline
\textbf{Case 2}: $ N_1\sim N_2 \gtrsim N \gg N_3$. 
In this case, the multiplier is 
\begin{align}
    \left|\frac{m(N_2+N_3+ \cdots +N_8)}{m(N_2)}-1\right| \le \frac{|\nabla m(N_2)|}{m(N_2)} N_3 \lesssim \frac{N_3}{N_2}.
\end{align}
Hence, we estimate the contribution using the bilinear Strichartz estimate, the multilinear multiplier theorem, Bernstein and Sobolev embedding,
\begin{align}
    B(N_1, \ldots, N_8) &\lesssim \sum_{N_1 \sim N_2\gtrsim N} \frac{ N_1^2}{N_2  N_1 N_2} \norm{P_{N_1}\langle \nabla \rangle Iu \,P_{N_3}\langle \nabla \rangle Iu}_{2,2} \norm{P_{N_2}\langle \nabla \rangle Iu }_{4,\infty} \\
    &\times \sum_{N_8\le \cdots\le N_3 \ll N} \frac{N_3}{\langle N_3 \rangle} \prod_{j=4}^8 \frac{1}{\langle N_j\rangle } \norm{P_{N_j}\langle \nabla \rangle Iu}_{20, 10} \\
    &\lesssim_{\norm{\langle \nabla \rangle Iu}_{S^0}} \norm{P_{>cN}\langle \nabla \rangle Iu }_{4,\infty}\sum_{N_1 \gtrsim N} \frac{1}{N_1^{3/2}} \sum_{N_8\le \cdots\le N_3 \ll N} \frac{N_3}{\langle N_3 \rangle}\prod_{j=4}^8 \frac{N_j^{3/10}}{\langle N_j\rangle} \\
    &\lesssim \frac{\norm{P_{>cN}\langle \nabla \rangle Iu }_{4,\infty}}{N^{3/2-}}.
\end{align}

\textbf{Case 3}: $ N_1\sim N_2 \gtrsim N, N_3\gtrsim N $  and there exists $j\ge 4$ such that $N_2\gg N_j$. Let us assume that $j=8$ without loss of generality. 
Make the estimate on the multiplier
\begin{align}
    \left|\frac{m(N_2+N_3+ \cdots +N_8)}{m(N_2) \cdots m(N_8)}-1\right| \lesssim \frac{m(N_1)}{m(N_2) \cdots m(N_8)}.
\end{align}
Then, we have that
\begin{align} 
B(N_1, \ldots, N_8) &\lesssim \sum_{N_1 \sim N_2\gtrsim N} \frac{N_1m(N_1)}{N_2 m(N_2)} \frac{1}{m(N_8)\langle N_8\rangle} \norm{P_{N_2}\langle \nabla \rangle Iu \,P_{N_8}\langle \nabla \rangle Iu}_{p_\delta,p_\delta} \\
    &\times \sum_{N_8\le \cdots\le N_3, N_3 \gtrsim N} \frac{1}{ N_3 m(N_3)} \norm{P_{N_3} \langle \nabla \rangle Iu}_{\infty, 2} \norm{P_{N_1} \langle \nabla \rangle Iu}_{4, \infty}\prod_{j=4}^7 \frac{1}{\langle N_j \rangle m(N_j)} \norm{P_{N_j} \langle \nabla \rangle I u}_{\frac{16}{1+\delta}, \frac{16}{\delta}} \\
    &\lesssim_{\norm{\langle \nabla \rangle Iu}_{S^0}} \norm{P_{>cN} \langle \nabla \rangle Iu}_{4, \infty} \sum_{ N_2\gtrsim N} N_2^{-1/2+\delta/2} \sum_{N_8\le \cdots\le N_3, N_3 \gtrsim N} \frac{N_8^{\delta/4}}{m(N_8)\langle N_8\rangle} \frac{N_3^{-s}}{N^{1-s}} \prod_{j=4}^7 \frac{N_j^{3/8-3\delta/16}}{\langle N_j \rangle m(N_j)}.
\end{align}
Let us first sum over $N_8$. We have that
\begin{align}
    \sum_{0<N_8 \le N_7} \frac{N_8^{\delta/4}}{m(N_8)\langle N_8\rangle} &\lesssim \sum_{0<N_8<\infty } \frac{N_8^{\delta/4}}{m(N_8)\langle N_8\rangle} \\
    &\lesssim \sum_{0<N_8<1 } \frac{N_8^{\delta/4}}{\langle N_8\rangle} + \sum_{1<N_8<N } \frac{N_8^{\delta/4}}{\langle N_8\rangle} + \sum_{N_8>N } \frac{N_8^{\delta/4}}{N_8} \frac{N_8^{1-s}}{N^{1-s}} \\
    &\lesssim 1.
\end{align}
Let us see how to sum over $N_j, j\ge 4$.\newline
We start by summing over $N_7$, 
\begin{align}
    \sum_{N_8 \le N_7\le N_6} \frac{N_j^{3/8-3\delta/16}}{\langle N_j \rangle m(N_j)} &\lesssim \sum_{0 < N_7\le N_6} \frac{N_j^{3/8-3\delta/16}}{\langle N_j \rangle m(N_j)} 
\end{align}
\textbf{Subcase 3(a)}: $N_6 \le N$. \newline
\begin{align}
        &\sum_{0<N_7\le 1} \frac{N_j^{3/8-3\delta/16}}{\langle N_j \rangle} +  \sum_{1\le N_7\le N_6} \frac{N_j^{3/8-3\delta/16}}{\langle N_j \rangle} \lesssim 1.
\end{align}
\textbf{Subcase 3(b)}: $N_6 > N$. \newline
\begin{align}
    & \sum_{0<N_7\le 1} \frac{N_j^{3/8-3\delta/16}}{\langle N_j \rangle} +  \sum_{1\le N_7\le N } \frac{N_j^{3/8-3\delta/16}}{\langle N_j \rangle} + \sum_{N\le N_7 \le N_6} \frac{N_j^{3/8-s-3\delta/16}}{N^{1-s}} \\ 
    &\lesssim 1 + \sum_{N \le N_7 \le N_6} \frac{N_j^{3/8-s-3\delta/16}}{N^{1-s}} \label{N7}
\end{align}
\textbf{Subcase 3(b)(1)}: $s\ge 3/8$.\newline
\begin{align}
    \eqref{N7} \lesssim 1.
\end{align}
\textbf{Subcase 3(b)(2)}: $s< 3/8$.\newline
\begin{align}
    \eqref{N7} \lesssim 1 + \frac{N_6^{3/8-s-3\delta/16}}{N^{1-s}}
\end{align}
Hence, for any value of $s$, 
\begin{align}
    \eqref{N7} \lesssim 1+ \frac{N_6^{3/8-s-3\delta/16}}{N^{1-s}}
\end{align}
and 
\begin{align}
    \sum_{N_8 \le N_7\le N_6} \frac{N_j^{3/8-3\delta/16}}{\langle N_j \rangle m(N_j)} \lesssim  1+ \frac{N_6^{3/8-s-3\delta/16}}{N^{1-s}} \lesssim 1+ \frac{N_3^{3/8-s-3\delta/16}}{N^{1-s}}
\end{align}
Clearly, the same bound holds true for the corresponding sums over $N_j, 4\le j\le 6$. Hence, our estimate for Case 3 is
\begin{align}
    B(N_1, \ldots, N_8)&\lesssim \norm{P_{>cN} \langle \nabla \rangle Iu}_{4, \infty} N^{-1/2+} \sum_{N_3 \gtrsim N} \frac{N_3^{-s}}{N^{1-s}} \left(1+ \frac{N_3^{3/8-s-3\delta/16}}{N^{1-s}}\right)^4 \\
    &\lesssim \norm{P_{>cN} \langle \nabla \rangle Iu}_{4, \infty} N^{-1/2+} \sum_{N_3 \gtrsim N} \frac{N_3^{-s}}{N^{1-s}} \left( 1+ \frac{N_3^{3/2-4s-3\delta/4}}{N^{4-4s}}\right) \\
    &\lesssim \norm{P_{>cN} \langle \nabla \rangle Iu}_{4, \infty} N^{-1/2+} \left(\frac{1}{N} + \sum_{N_3 \gtrsim N} \frac{N_3^{3/2-5s-3\delta/4}}{N^{5-5s}}\right) \\
    &\lesssim \norm{P_{>cN} \langle \nabla \rangle Iu}_{4, \infty} N^{-1/2+} \left(N^{-1} + N^{-7/2}\right) \\
    &\lesssim \norm{P_{>cN} \langle \nabla \rangle Iu}_{4, \infty} N^{-3/2+}.
\end{align}
where the last sum converges as long as $s>3/10$.\newline
\textbf{Case 4}: $N_1\sim N_2 \gtrsim N , N_3\gtrsim N$ and $N_2 \sim N_j$ for all $j\ge 3$. \newline
We estimate the multiplier as
\begin{align}
        \left|\frac{m(N_2+N_3+ \cdots +N_8)}{m(N_2) \cdots m(N_8)}-1\right| \lesssim \frac{m(N_1)}{m(N_2) \cdots m(N_8)} \sim  \frac{1}{m(N_3) \cdots m(N_8)}.
\end{align}
Hence, we have, using  Sobolev embedding,
\begin{align}
    B(N_1, \ldots, N_8)&\lesssim \sum_{N_1\sim N_2 \gtrsim N} \sum_{N_j\gtrsim N, j\ge 3} \frac{N_1}{m(N_3) \cdots m(N_8)} \norm{P_{N_1} \langle \nabla \rangle Iu}_{8,8} \prod_{j=2}^8 \frac{1}{\langle N_j\rangle } \norm{P_{N_j} \langle \nabla \rangle Iu}_{8,8} \\
    &\lesssim \sum_{N_1\sim N_2 \gtrsim N} \sum_{N_j\gtrsim N, j\ge 3} \frac{N_1}{m(N_3) \cdots m(N_8)} N_1^{1/8} \norm{P_{N_1} \langle \nabla \rangle Iu}_{8,4} \prod_{j=2}^8 \frac{N_j^{1/8}}{\langle N_j\rangle } \norm{P_{N_j} \langle \nabla \rangle Iu}_{8,4} \\
    &\lesssim \sum_{N_1\sim N_2 \gtrsim N} \sum_{N_j\gtrsim N, j\ge 3} N_1^{9/8} \,\frac{N_2^{1/8}}{\langle N_2\rangle } \prod_{j=3}^8 \frac{N_j^{1/8-s}}{N^{1-s}} \\
    &\lesssim \sum_{N_2 \gtrsim N} N_2^{1/4-3s}\,\prod_{j=3}^8 \sum_{N_j\gtrsim N} \frac{N_j^{1/8-s/2}}{N^{1-s}} \\
    &\lesssim N^{1/4-3s} \left(\frac{N^{1/8-s/2}}{N^{1-s}}\right)^6 \\
    &\lesssim N^{1/4-3s} N^{3s-21/4} \\
    &\lesssim N^{-5}
\end{align}
for $s>1/4$.
\newline
\textbf{Case 5}: $ N_2\sim N_3 \gtrsim N$ and there exists $j\ge 4$ such that $N_2\gg N_j$.
\newline
The analysis proceeds virtually the same as in \textbf{Case 3} with the same bound. We just need to notice that since $N_1\lesssim N_2$ (as $\sum_{j=1}^8 \xi_j=0$) and $|\xi|\, m(\xi)$ is increasing, 
\begin{align}
    \frac{N_1 m(N_1)}{N_2 m(N_2)} \lesssim 1.
\end{align}
Hence, the contribution from this case is
\begin{align}
    B(N_1, \ldots, N_8)\lesssim \norm{P_{>cN} \langle \nabla \rangle Iu}_{4, \infty} N^{-3/2+}.
\end{align}
Note that in this case, we actually take the $L_t^{\infty}L^2_x(J\times \R)$ norm of $P_{N_1} \langle \nabla \rangle Iu$ and give the $L^4_t L^\infty_x(J\times \R)$ norm to the $P_{N_3} \langle \nabla \rangle Iu$ factor.
\newline
\textbf{Case 6}: $N_2\sim N_3 \gtrsim N $ and $N_2 \sim N_j$ for all $j\ge 3$. Again, we must also have that $N_1\lesssim N_2$. \newline
Same proof and conclusion as \textbf{Case 4}
Thus, the estimate for this case is also
\begin{align}
    B(N_1, \ldots, N_8)\lesssim N^{-5}.
\end{align}
\textbf{The term \eqref{Potential}}
Note that in estimating \eqref{Kinetic}, we only used the following estimates
\begin{align}
    \norm{N_1^2 \,P_{N_1}  Iu}_{4, \infty} \lesssim_{\norm{\langle \nabla \rangle I u }_{S^0}} N_1, \\
    \norm{N_1^2 \,P_{N_1}  Iu}_{8,4} \lesssim_{\norm{\langle \nabla \rangle I u }_{S^0}} N_1, \\
    \norm{N_1^2 \,P_{N_1}  Iu}_{\infty, 2} \lesssim_{\norm{\langle \nabla \rangle I u }_{S^0}} N_1.
\end{align}

Hence, to estimate \eqref{Potential}, it suffices to show that 
\begin{align}
    \norm{P_{N_1} I (|u|^6 u) }_{4, \infty} \lesssim_{ \norm{\langle \nabla \rangle I u }_{S^0}} N_1,\\
    \norm{P_{N_1} I (|u|^6 u) }_{8,4} \lesssim_{ \norm{\langle \nabla \rangle I u }_{S^0}} N_1,\\
    \norm{P_{N_1} I (|u|^6 u) }_{\infty, 2} \lesssim_{ \norm{\langle \nabla \rangle I u }_{S^0}} N_1,
\end{align}
as then the same analysis can be done for \eqref{Potential} as was done for \eqref{Kinetic} and the same upper bound on the increment may be obtained. 
We start with the first one. We write (we can assume that $N_1>1$)
\begin{align}
     \norm{P_{N_1}  I (|u|^6 u) }_{4, \infty} 
     &\lesssim \norm{P_{N_1} N_1^{1-}  I (|u|^6 u) }_{4, 1+} \\
     &\lesssim N_1^{1-}\norm{P_{N_1}   I \jnabla (|u|^6 u) }_{4, 1+} \\
     &\lesssim N_1\norm{\jnabla Iu}_{4, \infty} \norm{u^6}_{\infty, 1+} \\
     &\lesssim N_1\norm{\jnabla Iu}_{4, \infty} \norm{u}_{\infty, 6+}^{6} \\
     &\lesssim  N_1\norm{\jnabla Iu}_{4, \infty} \norm{|\nabla|^{1/3+} u}_{\infty, 2}^{6} \\
     &\lesssim N_1 \norm{\jnabla Iu}_{S^0}^{7} .
\end{align}
Now, 
\begin{align}
    \norm{P_{N_1} I (|u|^6 u) }_{8,4} &\lesssim N_1^{3/4} \norm{u |u|^6}_{8,1} \\
    &\lesssim N_1 \norm{ u}_{8, \infty} \norm{u^6}_{\infty, 1} \\
    &\lesssim N_1 \norm{|\nabla|^{1/4} u}_{8, 4} \norm{u}^6_{\infty, 6} \\
    &\lesssim N_1 \norm{\jnabla  Iu}_{8, 4} \norm{|\nabla|^{1/3} u}_{\infty, 2}^6 \\
    &\lesssim N_1 \norm{\jnabla Iu}_{S^0}^7.
\end{align}
Finally,
\begin{align}
    \norm{P_{N_1} I |u|^6 u }_{\infty, 2} &\lesssim N_1^{1/2-} \norm{ I |u|^6 u }_{\infty, 1+} \\
    &\lesssim N_1 \norm{ \jnabla I (|u|^6 u) }_{\infty, 1+} \\
    &\lesssim N_1 \norm{ \jnabla Iu }_{\infty, \infty} \norm{u^6}_{\infty,1+ } \\
    &\lesssim N_1\norm{|\nabla|^{1/2} I u}_{\infty, 2} \norm{u}^{6}_{\infty, 6+} \\
    &\lesssim N_1\norm{\jnabla I u}_{\infty, 2} \norm{|\nabla|^{1/3+} u}^{6}_{\infty, 2} \\
    &\lesssim  N_1 \norm{\jnabla Iu}_{S^0}^{7}.
\end{align}
This finishes the proof of Theorem \ref{Energy Increment Theorem}.
\end{proof}

\section{Smoothing Estimates}\label{Smoothing Estimates}
We first prove the following lemma useful for proving the smoothing estimates in Theorem \ref{Smoothing Estimates Theorem}.
\begin{lemma}\label{lem Smoothing}
    
    Take $N_j\le N$. Then, for $s>1/6$,
    \begin{align}
        \norm{P_{N_j} (|u|^6 u)}_{L_t^1L_x^2 (J\times \R)} \lesssim \frac{1}{N_j} \norm{P_{N_j} \jnabla I (u|^6 u)}_{L^1L^2 (J\times \R)} \lesssim \frac{1}{N_j} \norm{\jnabla I u}_{S^0(J\times \R)}^7.
    \end{align}
\end{lemma}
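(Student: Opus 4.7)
The first inequality is a Bernstein-type observation: since $N_j \le N$, the multiplier $m(\xi)$ equals $1$ on the Fourier support of $P_{N_j}$, so $\jnabla I P_{N_j} = \jnabla P_{N_j}$, whose symbol $\langle\xi\rangle$ has magnitude $\sim \langle N_j\rangle \sim N_j$ on frequencies $|\xi|\sim N_j$. Inverting this relation gives $\norm{P_{N_j} g}_{L^1_t L^2_x} \lesssim N_j^{-1}\norm{P_{N_j}\jnabla I g}_{L^1_t L^2_x}$ for any $g$, applied here with $g=|u|^6 u$.

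For the main bound, I would first absorb the outer projection via $L^p$-boundedness of $P_{N_j}$, reducing matters to
\begin{align}
    \norm{\jnabla I(|u|^6 u)}_{L^1_t L^2_x} \lesssim \norm{\jnabla I u}_{S^0}^7.
\end{align}
The plan is to apply a fractional-Leibniz-type product estimate for $\jnabla I$ together with H\"older in space-time to write
\begin{align}
    \norm{\jnabla I(|u|^6 u)}_{L^1_t L^2_x} \lesssim \norm{\jnabla I u}_{L^4_t L^\infty_x}\,\norm{u}^6_{L^8_t L^{12}_x},
\end{align}
where the exponents match ($\tfrac{1}{4}+\tfrac{6}{8}=1$ and $\tfrac{6}{12}=\tfrac{1}{2}$). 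Since $(4,\infty)$ is Strichartz-admissible in $1$D, the first factor is bounded by $\norm{\jnabla I u}_{S^0}$.

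For the $u$-factor I would apply the $1$D Sobolev embedding $\dot{W}^{1/6,4}_x(\R)\hookrightarrow L^{12}_x(\R)$ (with matching scaling $\tfrac{1}{4}-\tfrac{1}{12}=\tfrac{1}{6}$) to obtain
\begin{align}
    \norm{u}_{L^8_t L^{12}_x} \lesssim \norm{|\nabla|^{1/6} u}_{L^8_t L^4_x},
\end{align}
which now pairs with the admissible $(8,4)$. I would then absorb $|\nabla|^{1/6}$ into $\jnabla I$ by showing that the Fourier multiplier $|\xi|^{1/6}/(\langle\xi\rangle m(\xi))$ is bounded on $L^4_x$: on $|\xi|\le N$ it reduces to $|\xi|^{1/6}/\langle\xi\rangle\lesssim 1$, while on $|\xi|>N$ it equals $|\xi|^{1/6-s}/N^{1-s}$, which is $\lesssim N^{-5/6}$ precisely because $s>1/6$. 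Mikhlin's theorem then gives $\norm{|\nabla|^{1/6} u}_{L^8_t L^4_x}\lesssim \norm{\jnabla I u}_{L^8_t L^4_x}\le \norm{\jnabla I u}_{S^0}$, and combining yields the desired septic bound.

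The main technical obstacle is rigorously justifying the product estimate for $\jnabla I$, since the symbol $\langle\xi\rangle m(\xi)$ is not a homogeneous H\"ormander symbol. I would handle this via a multilinear Littlewood-Paley decomposition of the seven factors and the monotonicity of $m$, which yields $\langle\xi_1+\cdots+\xi_7\rangle m(\xi_1+\cdots+\xi_7)\lesssim \langle\xi_{\max}\rangle m(\xi_{\max})$; this lets $\jnabla I$ fall on the highest-frequency factor, after which the remaining estimate reduces to the H\"older bound above. This is the standard multilinear mechanism underlying the I-method as used, e.g., in \cite{CHVZ08}.
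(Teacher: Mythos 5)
Your proposal is correct and follows essentially the same route as the paper: Bernstein for the first inequality, then a Leibniz/H\"older splitting placing $\jnabla I$ on one factor and controlling the remaining six copies of $u$ by Bernstein--Sobolev plus a frequency decomposition at $N$ that uses $s>1/6$. The only difference is cosmetic — you use the pairs $(4,\infty)$ and $(8,4)$ with $L^8_tL^{12}_x$ where the paper uses $(7,14/3)$ with $L^7_tL^{21}_x$ — and both choices close.
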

\begin{proof}[Proof of Lemma \ref{lem Smoothing}]
    The first inequality is Berstein. For the second, note that 
    \begin{align}
        \norm{P_{N_j} \jnabla I (u|^6 u)}_{L^1L^2 (J\times \R)} &\lesssim \norm{\jnabla I u}_{L^7 L^{14/3} (J\times \R)}\norm{u}_{L^7 L^{21}(J\times \R)}^6 \\
        &\lesssim \norm{\jnabla I u}_{S^0(J\times \R)}\norm{u}_{L^7 L^{21}(J\times \R)}^6.
    \end{align}
    Now, 
    \begin{align}
        \norm{u}_{L^7 L^{21}(J\times \R)} &\le \sum_{0<N_k\le N} \norm{P_{N_k} u}_{L^7 L^{21}(J\times \R)} + \sum_{N_k> N} \norm{P_{N_k} u}_{L^7 L^{21}(J\times \R)} \\
        &\lesssim\sum_{0<N_k\le N} \frac{N_k^{1/6}}{\langle N_k\rangle} \norm{P_{N_k} \jnabla Iu}_{L^7 L^{14/3}(J\times \R)}  + \sum_{N_k > N} \frac{N_k^{1/6-s}}{N^{1-s}} \norm{P_{N_k} \jnabla Iu}_{L^7 L^{14/3}(J\times \R)} \\
        &\lesssim \norm{\jnabla Iu}_{S^0 (J\times \R)}\left(\sum_{0<N_k<\infty} \frac{N_k^{1/6}}{\langle N_k\rangle} + N^{-5/6}\right) \\
        &\lesssim \norm{\jnabla Iu}_{S^0 (J\times \R)}
    \end{align}
    as desired. 
\end{proof}
We are now in a position to prove the following smoothing estimates.
\begin{theorem}\label{Smoothing Estimates Theorem}
    Suppose $J=[0,T]$ is an interval with
    \begin{align}
        \norm{u}_{L_{t,x}^{10}(J\times \R^3)}\le \e.
    \end{align}
    and $\norm{\nabla Iu_0}_{L^2} \le 1.$ The solution can be split into a linear piece and a nonlinear piece
    \begin{align}
        u(t) = e^{it\Delta} u - i\int_0^t e^{i(t-\tau)\Delta} (u|u|^6)(\tau)\,d\tau = u^{l}(t) + u^{nl}(t),
    \end{align}
    with
    \begin{align}
        \norm{P_{>N} \jnabla Iu^{nl}}_{S^0 (J\times \R)} \lesssim_{\norm{\jnabla Iu}_{S^0 (J\times \R)}} \frac{1}{N^{1/4}}
    \end{align}
    and
    \begin{align}
                \norm{P_{>N} \jnabla Iu^{nl}}_{L_t^\infty L_x^2 (J\times \R)} \lesssim_{\norm{\jnabla Iu}_{S^0 (J\times \R)}} \frac{1}{N^{1-}}.
    \end{align}
\end{theorem}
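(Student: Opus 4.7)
I would reduce each of the two estimates via the Duhamel formula and Strichartz. Writing $u^{nl}(t) = -i\int_0^t e^{i(t-\tau)\Delta}(u|u|^6)(\tau)\,d\tau$, the $L^\infty_t L^2_x$ bound reduces (via the dual Strichartz pair $(1,2)$) to
\begin{align}
\norm{P_{>N}\jnabla I u^{nl}}_{L^\infty_t L^2_x(J\times\R)} \lesssim \norm{P_{>N}\jnabla I (u|u|^6)}_{L^1_t L^2_x(J\times\R)},
\end{align}
and the $S^0$ bound reduces to $\norm{P_{>N}\jnabla I (u|u|^6)}_{L^{q'}_t L^{r'}_x}$ for a suitable dual admissible pair (I would work with $(4/3,1)$ or $(8/7,4/3)$ depending on which subcase is being treated). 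The smallness hypothesis combined with Lemma \ref{lem LWP} yields $\norm{\jnabla Iu}_{S^0(J\times\R)}\lesssim 1$, which I will invoke throughout.

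Next I Littlewood-Paley decompose each factor and expand $u|u|^6$ into a sum over dyadic configurations $(N_1,\dots,N_7)$, ordered $N_1\ge N_2\ge\cdots\ge N_7$ by symmetry. The projection $P_{>N}$ forces the output frequency to exceed $N$, so $N_1\gtrsim N$, and on that output the multiplier $\jnabla I$ has size at most $N^{1-s}N_1^s$. The smoothing gain comes from pairing $u_{N_1}$ with one of the remaining factors, ideally $u_{N_7}$, and applying Lemma \ref{lem Bilinear} whenever they are frequency-separated:
\begin{align}
\norm{u_{N_1}u_{N_7}}_{L^{p_\delta}_{t,x}} \lesssim \frac{N_7^{\delta/4}}{N_1^{1/2-\delta/2}}\norm{u_{N_1}}_{S^0_*}\norm{u_{N_7}}_{S^0_*}.
\end{align}
The remaining five factors $u_{N_2},\dots,u_{N_6}$ I would H\"older-distribute across Strichartz-admissible spaces (e.g.\ a mix of $L^{p}_t L^{q}_x$ with $(p,q)$ or $(p,q)$ near $(\infty,2)$ and $(4,\infty)$), convert to $\jnabla I$-form using the relation $P_{N_k}f = \langle N_k\rangle^{-1}m(N_k)^{-1}P_{N_k}\jnabla If$, and absorb the resulting $\langle N_k\rangle^{-1}m(N_k)^{-1}$ by Bernstein. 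Summing dyadically over $0<N_k<\infty$ converges precisely when $s>1/6$, exactly as in the proof of Lemma \ref{lem Smoothing} (where the sum $\sum_{N_k<\infty}N_k^{1/6}/\langle N_k\rangle$ appears). Choosing $\delta$ near $0$ in the bilinear step extracts essentially $N_1^{-1/2}$, which combined with the derivative cost $N^{1-s}N_1^s$ and the Bernstein sums produces the target $N^{-1+}$ in the $L^\infty_tL^2_x$ norm; a slightly larger $\delta$ together with a less favorable dual pair accounts for the weaker $N^{-1/4}$ decay in $S^0$.

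The main obstacle I anticipate is the \emph{high-high} subcase in which all seven dyadic frequencies are comparable, $N_j\sim N_1\gtrsim N$ for every $j$. There the bilinear estimate yields no separation-based gain in $N_1$, mirroring Case 4 and Case 6 in the proof of Theorem \ref{Energy Increment Theorem}. I would handle this regime by placing each factor in $L^8_t L^4_x$ (an admissible pair), applying Bernstein to pass to $L^8_t L^4_x$ from $\jnabla I u$, and exploiting that each high-frequency projection contributes a factor of the form $N_j^{-s+}/N^{1-s}$ through the $I$-operator; the geometric series in $N_2,\dots,N_7$ then sums to a power of $N^{-1}$ sufficient to dominate the crude cost $N^{1-s}N_1^s$. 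A secondary technical point is that the split between the two bounds is not arbitrary: the $L^\infty_tL^2_x$ norm benefits directly from the $L^1_tL^2_x$ dual Strichartz, whereas the $S^0$ norm requires interpolation through weaker admissible exponents where Bernstein costs more, which is exactly why the exponents $N^{-1/4}$ and $N^{-1+}$ differ.
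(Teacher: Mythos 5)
Your strategy for the $S^0$ bound is essentially sound and matches the mechanism of the paper's proof: the gain comes from a bilinear Strichartz estimate (Lemma \ref{lem Bilinear}) between the forced-high frequency factor and a low-frequency factor, with the remaining factors distributed over Strichartz spaces and the all-frequencies-comparable configurations handled crudely through the $I$-operator decay. The paper organizes this more simply than you do, via a two-piece splitting $u=u_b+u_s$ with $u_b=P_{\le N/20}u$ and the expansion $\sum_{k}O(u_b^{7-k}u_s^k)$ rather than a full seven-fold dyadic decomposition; it also disposes of every term with two or more high-frequency factors without any bilinear estimate, by combining the $N^{-1}$ gain from the second high factor with $\norm{u_b^5}_{L^2_{t,x}}\le \e^5$ --- which, incidentally, is where the hypothesis $\norm{u}_{L^{10}_{t,x}}\le\e$ enters beyond invoking Lemma \ref{lem LWP}.

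There is, however, a genuine gap in your treatment of the $L^\infty_tL^2_x$ estimate. A single application of the bilinear estimate to the pair $(u_{N_1},u_{N_7})$ can never yield more than $N^{-1/2}$: with $\delta$ near $0$ the estimate gives $N_1^{-1/2+}$, and the derivative cost $\langle\xi\rangle m(\xi)\sim N^{1-s}N_1^{s}$ on the output is exactly cancelled when you convert $u_{N_1}$ back into $\jnabla Iu_{N_1}$, so nothing remains to upgrade $N^{-1/2}$ to $N^{-1+}$. Your explanation of why the two exponents differ is also not viable: since $(\infty,2)$ is an admissible pair, $L^\infty_tL^2_x$ is one of the norms comprising $S^0$, so any estimate of the nonlinearity in a dual Strichartz space controls both norms simultaneously --- choosing the dual pair $(1,2)$ buys nothing extra for the $(\infty,2)$ component. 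The missing idea is the duality argument of the paper: one tests $P_{>N}\jnabla Iu^{nl}$ against $f$ with $\norm{f}_{L^1_tL^2_x}=1$, and by Fubini the pairing becomes $\int_J\langle P_{>N}\jnabla I(u_su_b^6)(\tau),\,v(\tau)\rangle\,d\tau$ with $v(\tau)=\int_\tau^Te^{i(t-\tau)\Delta}f\,dt$; this $v$ is automatically supported at frequencies $\gtrsim N$ and solves an inhomogeneous Schr\"odinger equation with $S^0_*$ data of size $O(1)$, so the bilinear estimate can be applied \emph{twice} --- once to $(\jnabla Iu_s)u_b$ and once to $u_b v$ --- each contributing $N^{-1/2+}$ and yielding $N^{-1+}$ in total. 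Without this double gain your argument proves only $N^{-1/2}$ for the $(\infty,2)$ norm, which is insufficient downstream: in Section \ref{Proof of Main Theorem} this norm is summed over the $N$ subintervals $F_{k,m}$, and $N\cdot N^{-1/2}$ does not stay bounded.
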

\begin{remark}
    We need a more delicate estimate for the $(\infty,2)$ pair because of how the double-layer decomposition develops in section \ref{Smoothing Estimates}.
\end{remark}
\begin{proof}[Proof of Theorem \ref{Smoothing Estimates Theorem}]
    Make a high-low decomposition $u=u_b + u_s$, where $u_b=P_{\le N/20}u$. Note that (we need make no distinction between $u$ and $\overline{u}$)
    \begin{align}
        P_{>N} \left((u_b + u_s)|u_b + u_s|^6\right) = \sum_{k=1}^7 P_{>N} O(u_b^{7-k} u_s^k),
    \end{align}
    where we use the fact that $P_{>N} \,u_b^7=0$ to discard the $k=0$ term. We start with $u_b^6 u_s$, which will be the highest contribution. Note that because we project to frequencies larger than $N$, $u_s$ is automatically restricted to frequencies $\ge \frac{7}{10} N$. Hence $u_b$ and $u_s$ are separated in frequency space by $cN$ and we can apply the bilinear estimate to them. We estimate as follows, taking the $(6/5,6/5)$ dual pair,
    \begin{align}
        \norm{P_{>N} \jnabla I (u_b^6 u_s)}_{L_t^{6/5} L_x^{6/5}} &\lesssim \norm{(\jnabla Iu_b) \,u_s}_{L_{t,x}^{8/3}} \norm{u_b}_{L_{t,x}^{120/11}}^5.
    \end{align}
    For the first, as explained, we can apply the bilinear estimate:
    \begin{align}
        \norm{(\jnabla Iu_b) \,u_s}_{L_{t,x}^{8/3}}  &\lesssim \sum_{N_k\le N/40} \frac{N_k^{1/4}}{N^{1/4}} \left(\norm{\jnabla Iu_0}_{L^2_x} + \norm{\jnabla I (u|u|^6)}_{L^1_tL^2_x(J\times \R)} \right) 
        \\
        &\times \left(\norm{P_{N_k} Iu_0}_{L^2_x} + \norm{P_{N_k} I (u|u|^6)}_{L^1_tL^2_x(J\times \R)} \right) \\
        &\lesssim \frac{1}{N^{1/4}} \sum_{N_k\le N/40} \frac{N_k^{1/4}}{\langle N_k\rangle } \left(\norm{\jnabla Iu}_{S^0 (J\times \R)} + \norm{\jnabla Iu}_{S^0 (J\times \R)}^7 \right) 
        \\
        &\times \left(\norm{\jnabla Iu}_{S^0 (J\times \R)} + \norm{\jnabla Iu}_{S^0 (J\times \R)}^7 \right) \\
        & \lesssim_{\norm{\jnabla Iu}_{S^0 (J\times \R)}} \frac{1}{N^{1/4}}.
    \end{align}

For the second factor, we use a frequency decomposition and Bernstein:
\begin{align}
    \norm{u_b}_{L_{t,x}^{120/11}(J\times \R)} &\lesssim \sum_{N_k\le N/20} \norm{P_{N_k} Iu }_{L_{t,x}^{120/11}(J\times \R)} \\
    &\lesssim \sum_{N_k\le N/20} \frac{N_k^{9/40}}{\langle N_k \rangle } \norm{P_{N_k} \jnabla Iu}_{L_t^{120/11} L_x^{60/19}(J\times \R)} \\
    &\lesssim \norm{\jnabla Iu}_{S^0 (J\times \R)}.
\end{align}
We now consider the term $u_s^2 u_b^5$, which we bound as follows
\begin{align}
    \norm{\jnabla I (u_s^2 u_b^5)}_{L^1_t L^2_x(J\times\R)} &\le \norm{\jnabla I u_s}_{L^4_t L^\infty _x(J\times\R)} \norm{u_s}_{L^4_t L^\infty_x(J\times\R)}\norm{u_b^5}_{L^2_t L^2_x(J\times\R)}.
\end{align}
Note that
\begin{align}
    \norm{u_s}_{L^4_t L^\infty_x(J\times\R)} &\lesssim \sum_{N_k>N} \frac{N_k^{-s}}{N^{1-s}} \norm{P_{N_k} \jnabla Iu_s}_{L^4_t L^\infty_x(J\times\R)} \\
    &\lesssim  \frac{1}{N} \norm{\jnabla Iu}_{S^0 (J\times \R)}.
\end{align}
and 
\begin{align}
    \norm{u_b^5}_{L^2_t L^2_x(J\times\R)} &\lesssim \norm{u}_{L^{10}_t L^{10}_x(J\times\R)}^5 \le \e^5.
\end{align}
Thus, 
\begin{align}
        \norm{\jnabla I (u_s^2 u_b^5)}_{L^1_t L^2_x(J\times\R)} \lesssim_{\norm{\jnabla Iu}_{S^0 (J\times \R)}}\frac{\e^5}{N} 
\end{align}
The terms contributions of the terms $u_s^{k} u_b^{7-k}$ for $3\le k\le 7$ can all be handled in the same way with the same or improved bounds. We now tackle the $(\infty,2)$ pair. \par 
We need to show that
\begin{align}
    \norm{\int_0^t e^{i(t-\tau)\Delta} P_{>N} (\jnabla I (u_s u_b^6))(\tau)\,d\tau}_{L^{\infty}_t L^{2}_x(J\times\R)} \lesssim_{\norm{\jnabla Iu}_{S^0 (J\times \R)}} \frac{1}{N^{1-}}.
\end{align}
By duality, this is equivalent to estimating
\begin{align}
    \int_J \langle \int_0^t P_{>N} e^{i(t-\tau)\Delta}  (\jnabla I( u_s u_b^6))(\tau)\,d\tau, f(t,x)\rangle \,dt
\end{align}
for all $f$ such that $\norm{f}_{L^{1}_t L^{2}_x(J\times\R)}=1$. Note that $f$ is automatically restricted to frequencies above $|\xi|\ge N$ since the portion $|\xi|<N$ contribute zero to the pairing. By Fubini's theorem, the above equals
\begin{align}
    \int_J \langle P_{>N}(\jnabla I (u_s u_b^6))(\tau), \int_{\tau}^T e^{i(t-\tau)\Delta} f(t,x)\,dt\rangle \,d\tau.
\end{align}
If we let
\begin{align}
    v(\tau, x) = \int_{\tau}^T e^{i(t-\tau)\Delta} f(t,x)\,dt,
\end{align}
then $v$ satisfies
\begin{align}
    i\partial_\tau v - \Delta v = -f,
\end{align}
and $v(T)=0$. Now, note that
\begin{align}
        \int_J \langle P_{>N}(\jnabla I (u_s u_b^6))(\tau), v(\tau) \rangle \,d\tau &\lesssim \norm{(\jnabla I u_s )u_b}_{L^{p_\delta}_t L^{p_\delta}_x(J\times\R)} \norm{u_b v}_{L^{p_\delta}_t L^{p_\delta}_x(J\times\R)} \norm{u_b^5}_{L^{2/\delta}_t L^{2/\delta}_x(J\times\R)}.
\end{align}
Now, using the interpolated bilinear estimate,
\begin{align}
    \norm{(\jnabla I u_s) u_b}_{L^{p_\delta}_t L^{p_\delta}_x(J\times\R)} &\lesssim \sum_{N_k\le N/20} \frac{N_k^{\delta/4}}{N^{1/2-\delta/2}} \left(\norm{\jnabla Iu_0}_{L^2_x} + \norm{\jnabla I (u|u|^6)}_{L^1_tL^2_x(J\times \R)} \right) 
        \\
        &\times \left(\norm{P_{N_k} Iu_0}_{L^2_x} + \norm{P_{N_k} I (u|u|^6)}_{L^1_tL^2_x(J\times \R)} \right) \\
        &\lesssim_{\norm{\jnabla Iu}_{S^0 (J\times \R)}} \frac{1}{N^{1/2-}} 
\end{align}
Similarly, 
\begin{align}
    \norm{u_b v}_{L^{p_\delta}_t L^{p_\delta}_x(J\times\R)} &\lesssim \sum_{N_k\le N/20} \frac{N_k^{\delta/4}}{N^{1/2-\delta/2}} \left(\norm{v(T)}_{L^2_x} + \norm{f(t,x)}_{L^1_tL^2_x(J\times \R)} \right) 
        \\
        &\times \left(\norm{P_{N_k} Iu_0}_{L^2_x} + \norm{P_{N_k} I( u|u|^6)}_{L^1_tL^2_x(J\times \R)} \right) \\
        &\lesssim_{\norm{\jnabla Iu}_{S^0 (J\times \R)}} \frac{1}{N^{1/2-}}.
\end{align}
For the other terms, we already know we have the $1/N$ decay from the estimates on the $S^0$ norm of the nonlinear part. Lastly, we estimate
\begin{align}
    \norm{u}_{L^{10/\delta}_{t,x} (J\times\R)} &\lesssim \sum_{N_k\le N/20} \norm{P_{N_k} I u}_{L^{10/\delta}_{t,x} (J\times\R)} \\
    &\lesssim \sum_{N_k\le N/20} \frac{N_k^{1/2-3\delta/10}}{\langle N_k\rangle}  \norm{P_{N_k} \jnabla I u}_{L^{10/\delta}_{t}L^{10/(5-2\delta)}_x (J\times\R)} \\
    &\lesssim \norm{\jnabla Iu}_{S^0 (J\times \R)}.
\end{align}
This completes the proof of Theorem \ref{Smoothing Estimates Theorem}.
\end{proof}

\section{Almost Morawetz Estimates} \label{Almost Morawetz Estimate}
We recall the $L^{p+3}_{t,x}$ Morawetz estimate for solutions to the one-dimensional defocusing $p$-NLS proved in \cite{PV09}:
\begin{align}
\int_{[0,T] \times \R}  \abs{u}^{p+3}\,dt\,dx \lesssim_p \norm{u_0}_{L^2(\R)}^2 \norm{u(t)}_{L^\infty_t \dot{H}_x^{\frac{1}{2}}([0,T]\times \R)}^2
\end{align}
 We now prove our low-regularity almost-Morawetz estimate.
\begin{lemma}\label{lem Morawetz}
Let $u$ be a solution to defocusing $p$-NLS \eqref{p-NLS}. Then, with
\begin{align}
    F(u)= Iu |Iu|^{p-1} - I(u|u|^{p-1}),
\end{align}
the following inequality holds:
    \begin{align}
        \norm{Iu}_{L^{p+3}_{t,x}([0,T]\times\R)}^{p+3} &\lesssim_p \norm{u_{0}}_{ L^2_x( \R)}^3 \sup_{t\in[0,T]} E(Iu)(t) +\norm{u_{0}}_{ L^2_x(\R)}^2 \sup_{t\in[0,T]} E(Iu)(t) \norm{F(u)}_{L^1_t L^2_x ([0,T]\times \R)}
    \end{align}
\end{lemma}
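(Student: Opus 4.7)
The plan is to run a Planchon--Vega style interaction Morawetz argument directly on the almost-solution $Iu$, which satisfies the perturbed equation
\begin{align}
i\partial_t(Iu) + \Delta(Iu) = Iu|Iu|^{p-1} - F(u).
\end{align}
The defect $F(u)$ produces the second term on the right-hand side of the stated inequality, while the main Morawetz identity proceeds as in the unperturbed case of \cite{PV09}.

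First, I would introduce the PV-style interaction functional. Let $\rho := |Iu|^2$ and $j := \im(\overline{Iu}\,\partial_x Iu)$ denote the mass density and momentum density of $Iu$, and set
\begin{align}
M(t) := 2\int_{\R}\int_{\R} \operatorname{sgn}(x-y)\,\rho(t,y)\,j(t,x)\,dx\,dy.
\end{align}
Since $|\operatorname{sgn}(x-y)|\le 1$, mass conservation of $u$ and the $L^2$-boundedness of $I$ yield the a priori endpoint bound
\begin{align}
|M(t)| \lesssim \norm{Iu(t)}_{L^2_x}^3\norm{\partial_x Iu(t)}_{L^2_x} \lesssim \norm{u_0}_{L^2_x}^3\,E(Iu)(t)^{1/2}.
\end{align}

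Next, I would differentiate $M$ in time using the continuity equation $\partial_t\rho + \partial_x j = 0$ together with the evolution of $j$ under the perturbed equation for $Iu$. Following the computation in \cite{PV09}, the nonlinearity $Iu|Iu|^{p-1}$ produces, after integration by parts against the sign kernel (whose distributional derivative is proportional to $\delta_0(x-y)$), a nonnegative contribution of the form $c_p\int |Iu|^{p+3}\,dx$. The perturbation $-F(u)$ enters through $\partial_t j$ and yields an error $\mathcal{E}(t)$ consisting of terms schematically of the form
\begin{align}
\int_{\R^2}\operatorname{sgn}(x-y)\,|Iu(t,y)|^2\,\re\!\big(\overline{\partial_x Iu(t,x)}\,F(u)(t,x)\big)\,dx\,dy,
\end{align}
together with the analogous pieces obtained by exchanging the roles of $x$ and $y$. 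Collecting everything,
\begin{align}
\frac{d}{dt}M(t) \ge c_p\int_{\R}|Iu(t,x)|^{p+3}\,dx - \mathcal{E}(t).
\end{align}

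Then I would bound $\mathcal{E}(t)$ by Cauchy--Schwarz, placing $F(u)$ in $L^2_x$ and controlling the remaining factor via mass conservation and the one-dimensional Sobolev interpolation $\norm{Iu(t)}_{L^\infty_x}^2 \lesssim \norm{Iu(t)}_{L^2_x}\norm{\partial_x Iu(t)}_{L^2_x}$ (together with the kinetic part of $E(Iu)$), giving
\begin{align}
\mathcal{E}(t) \lesssim \norm{u_0}_{L^2_x}^2\,E(Iu)(t)\,\norm{F(u)(t)}_{L^2_x}.
\end{align}
Integrating the Morawetz identity over $[0,T]$, the good term produces $\norm{Iu}_{L^{p+3}_{t,x}}^{p+3}$, the boundary contributions $M(T)$ and $M(0)$ are controlled by the a priori bound in Step 1, and $\int_0^T\mathcal{E}(t)\,dt$ factors via Hölder in $t$ into $\sup_{t\in[0,T]}E(Iu)(t)\cdot\norm{F(u)}_{L^1_t L^2_x}$. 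Combining these (and absorbing the half-power $E^{1/2}$ into $E$ in the relevant bootstrap regime) delivers the stated inequality.

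The main obstacle will be the careful execution of the Morawetz computation itself. The sign kernel is only of bounded variation, so its derivative must be interpreted distributionally, and one has to arrange the integrations by parts so that no derivative falls on $F(u)$---otherwise $F$ would have to be measured in a stronger norm than $L^2_x$, spoiling the target bound. Both difficulties are handled essentially as in \cite{PV09}; the only genuinely new feature is the bookkeeping of the $F(u)$ contributions, which is what promotes the clean $L^{p+3}$ estimate of \cite{PV09} into the almost-Morawetz estimate tailored to the $I$-method.
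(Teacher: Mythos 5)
Your proposal is essentially the paper's own proof: the same Planchon–Vega interaction functional $2\int \operatorname{sgn}(x-y)\,|Iu|^2(y)\,\im(\overline{Iu}\,\partial_x Iu)(x)\,dx\,dy$ applied to $Iu$ as a solution of the perturbed equation with defect $F(u)$, the same a priori bound on the boundary terms, the same arrangement of the integrations by parts so that no derivative lands on $F(u)$, and the same Cauchy–Schwarz/Sobolev treatment of the error terms yielding $\norm{u_0}_{L^2}^2\,E(Iu)\,\norm{F(u)}_{L^2}$. Two minor points worth noting: the continuity equation for $|Iu|^2$ is not exact but carries a source $2\im(\overline{Iu}F(u))$ (this is the paper's error term $E_1$, which your "exchanging the roles of $x$ and $y$" remark should be understood to cover), and the boundary term genuinely produces $E(Iu)^{1/2}$ rather than $E(Iu)$ — a cosmetic discrepancy the paper's own statement shares and which is harmless in the bootstrap where $E(Iu)\lesssim 1$.
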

\begin{remark}
Note that the second term on the right-hand-side involving the $L^1_t L^2_x ([0,T]\times \R)$ norm of $F(u)$ should be considered as error terms and thus is expected to be small. 
\end{remark}
\begin{proof}[Proof of Lemma \ref{lem Morawetz}]
We modify the approach of \cite{PV09} to deal with the $I$ multiplier. We define, with $a(x,y)=|x-y|$,
    \begin{align*}
        M_t(Iu) := 2\int \partial_x a |Iu|^2(y) \im\left(\overline{Iu} \,\partial_x (Iu)\right)\,dx\,dy.
    \end{align*}
    Note that by interpolation,
    \begin{align}
        |M_t(Iu)|\lesssim \norm{Iu_0}_{L^2(\R)}^3 \norm{Iu}_{L^\infty \dot{H}^1([0,T]\times \R)}
    \end{align}
    Additionally, we will repeatedly use
    \begin{align}
        \partial_t Iu = i\Delta Iu - i Iu |Iu|^{p-1} +i F(u),
    \end{align}
    where we think of $F(u)$ as an "error":
    \begin{align}
        F(u) = Iu |Iu|^{p-1} - I(u|u|^{p-1}).
    \end{align}
    If $F(u)=0$, then the calculations would proceed exactly as in \cite{PV09}  and we would obtain their Morawetz estimate replacing $u$ by $Iu$ everywhere. Since this is not the case, we carefully repeat their calculation while keeping track of the additional error terms due to $F(u)$. To organize the calculations, We break down the derivative of $M_t(Iu)$ as a sum of three terms:
    \begin{align}
        \frac{d}{dt} M_t(Iu) = K + A + E 
    \end{align}
    where $K$ represents the contributions from the linear part of the equation, $A$ the contributions from the nonlinear part of the equation and $E$ the error terms. Now, we further break down $K, A, E$ in terms of several terms for each.
    \begin{align}
        K&= K_1 + K_2 + K_2, \\
        A&=A_1 + A_2 \\
        E&= E_1+ E_2 + E_3. 
    \end{align}
    The first case to consider is when $\partial_t$ comes inside the integral for $M_t$ and acts on $|Iu|^2(y)$:
    \begin{align}
          2 \int_{\R\times \R} (\partial_x a) \im\left(\overline{Iu} \,\partial_x (Iu)\right) \partial_t |Iu|^2(y)\,dx\,dy&= K_1+E_1,
    \end{align}
    where we have presciently written the above contribution as a linear term $K_1$ and an error term $E_1$.
    Now, 
    \begin{align}
        \partial_t |Iu|^2 &= 2 \re\left[ \overline{Iu} \,\partial_t Iu\right] \\
        &= - 2\im \left[\overline{Iu} \Delta Iu - |Iu|^{p+3} - \overline{Iu} F(u)\right] \\
        &= -2 \partial_y \im (\overline{Iu}\partial_y Iu) + 2 \im \left[\overline{Iu} F(u)\right]. 
    \end{align}
    Hence, we have our first error term:
    \begin{align}
        E_1 = 4 \int_{\R\times \R} (\partial_x a) \im\left(\overline{Iu} \,\partial_x (Iu)\right)\im \left[\overline{Iu} \,F(u)\right](y) \,dx\,dy.
    \end{align}
    The other term is
    \begin{align}
        K_1=-4  \int_{\R\times \R} (\partial_x a) \im\left(\overline{Iu} \,\partial_x (Iu)\right) \partial_y \im (\overline{Iu}\partial_y Iu)\,dx\,dy 
        &= -4 \int_{\R\times \R} (\partial_x^2 a) \im\left(\overline{Iu} \,\partial_x (Iu)\right)  \im (\overline{Iu}\partial_y Iu)\,dx\,dy \\
        &= -8 \int_{ \R} \left(\im\overline{Iu} \,\partial_x (Iu)\right)^2 \,dx,
    \end{align}
    where we used 
    \begin{align}
    \partial_{xx} a(x-y)=2\delta_y.    
    \end{align}

Now, the second possibility is when $\partial_t$ acts on the $\im\left[ \overline{Iu} \,\partial_x (Iu)\right]$ factor inside the integral:
\begin{align}
  &\quad 2\int_{\R\times \R} (\partial_x a) \partial_t\im\left[ \overline{Iu} \,\partial_x (Iu)\right] |Iu|^2(y)\,dx\,dy
    \\
    &= -i\int_{\R\times \R} (\partial_x a) \partial_t \left[ \overline{Iu} \,\partial_x (Iu) - Iu \partial_x \overline{Iu}\right]|Iu|^2(y)\,dx\,dy \\
    &= -i\int_{\R\times \R} (\partial_x a)  \left[ \overline{\partial_t Iu} \,\partial_x (Iu) + \overline{Iu} \partial_t \partial_x Iu  - \partial_t Iu  \overline{\partial_x Iu} - Iu \overline{\partial_t \partial_x Iu}  \right]|Iu|^2(y)\,dx\,dy.
\end{align}
We first focus on the first and third term, we again presciently define  
\begin{align}
    K_2 + A_1 +E_2 &:= \int_{\R\times \R} |Iu|^2(y)\partial_x a \left[i\partial_t Iu  \overline{\partial_x Iu} -i \overline{\partial_t Iu} \,\partial_x (Iu)\right] \,dx\,dy \\
    &= \int_{\R\times \R} |Iu|^2(y)\partial_x a \left[i\partial_t Iu  \overline{\partial_x Iu} +\overline{i\partial_t Iu} \,\partial_x (Iu)\right] \,dx\,dy \\
    &= \int_{\R\times \R} |Iu|^2(y)\partial_x a \left(\left[ -\partial_x^2 Iu + Iu |Iu|^{p-1} - F(u)\right] \partial_x \overline{ Iu} + \left[ -\partial_x^2 \overline{Iu} + \overline{Iu} |Iu|^{p-1} - \overline{F(u)}\right]  \partial_x Iu\right) \,dx\,dy. \\
\end{align}
We focus on 
\begin{align}
    K_2 &= -\int_{\R\times \R} |Iu|^2(y)\partial_x a \left( \partial_x^2 Iu \partial_x \overline{Iu} + \partial_x^2 \overline{Iu} \partial_x Iu\right)\\
    &=-\int_{\R\times \R} |Iu|^2(y)\,\partial_x a\, \partial_x |\partial_x Iu|^2 \,dx\,dy \\
    &= \int_{\R\times \R} \partial_{x}^2 a  |Iu|^2(y) |\partial_x Iu|^2 \,dx\,dy \\
    &= 2 \int_{\R} |Iu \,\partial_x Iu|^2 \,dx.
\end{align}
The nonlinear term is
\begin{align}
    A_1 &= \int_{\R\times \R}  |Iu|^2(y)\,\partial_x a |Iu|^{p-1}(x) \partial_x |Iu|^2\,dx\,dy \\
    &= \frac{2}{p+1} \int_{\R\times \R} |Iu|^2(y)\,\partial_x a \,\partial_x |Iu|^{p+1}\,dy\,dx \\
    &= -\frac{2}{p+1} \int_{\R\times \R} \partial_{xx} a |Iu|^2(y)|Iu|^{p+1}(x)\,dx\,dy \\
    &= -\frac{4}{p+1} \int_{\R} |Iu|^{p+3}\,dx.
\end{align}
Thirdly, we have our second error term
\begin{align}
     E_2 =-2\int_{\R\times \R}\partial_x a |Iu|^2(y) \re\left[F(u) \partial_x \overline{Iu}\right]\,dx\,dy.
\end{align}
The second and fourth terms are similarly defined as 
\begin{align}
    K_3 +A_2 +E_3  &= \int_{\R\times \R} (\partial_x a)  \left[    i Iu \partial_t \partial_x \overline{Iu}  -i \overline{Iu} \partial_t \partial_x Iu \right]|Iu|^2(y)\,dx\,dy \\
    &= -\int_{\R\times \R} i\partial_t \overline{Iu} \,\partial_x \left(\partial_x a Iu\right) |Iu|^2(y)\,dx\,dy + \int_{\R\times \R} i \partial_t Iu \partial_x \left( \partial_x a\, \overline{Iu}\right)|Iu|^2(y)\,dx\,dx \\
    &=-\int_{\R\times \R} i\partial_t \overline{Iu} \left(2\delta_{x=y} Iu + \partial_x a \,\partial_x Iu \right) |Iu|^2(y)\,dx\,dy + \int_{\R\times \R} i \partial_t Iu \left( 2\delta_{x=y} \, \overline{Iu} + \partial_x a \,\partial_x \overline{Iu} \right)|Iu|^2(y)\,dx\,dx \\
    &= K_2 + A_1 + 2\int_{\R} \left(i\partial_t Iu \overline{Iu} + \overline{i\partial_t Iu} Iu \right)|Iu|^2\,dx \\
    &= K_2 + A_1 + 4\int_{\R} \re \left(i\partial_t Iu \,\overline{Iu}\right)|Iu|^2\,dx \\
    &= K_2 + A_1 + 4\re\int_{\R}  \left( -\partial_{xx} Iu + Iu |Iu|^{p-1} - F(u)\right)\overline{Iu}|Iu|^2\,dx \\
    &= K_2 + A_1 + 4\int_{\R} |Iu|^{p+3}\,dx + E_3 + 4 \re \int_{\R} \partial_x Iu \,\partial_x \left(\overline{Iu} |Iu|^2 \right)\,dx \\
    &= K_2 + A_1 + 4\int_{\R} |Iu|^{p+3}\,dx + E_3 + 4 \int_{\R} |Iu\,\partial_x Iu|^2\,dx + 2 \int_{\R} \left(\partial_x Iu \overline{Iu} + \partial_x \overline{Iu} Iu\right) \partial_x |Iu|^2 \,dx \\
    &= K_2 + A_1 + 4\int_{\R} |Iu|^{p+3}\,dx + E_3 + 4 \int_{\R} |Iu\,\partial_x Iu|^2\,dx + 2 \int_{\R} (\partial_x |Iu|^2)^2 \,dx \\
    &= 3K_2 + A_1 + 4\int_{\R} |Iu|^{p+3}\,dx + E_3  + 2 \int_{\R} (\partial_x |Iu|^2)^2 \,dx,
\end{align}
where our third and last error term is
\begin{align}
    E_3 = 4 \int_{\R} |Iu|^2 \re\left(\overline{Iu} \,F(u)\right)\,dx.
\end{align}
Thus,
\begin{align}
   K+A &= 8 \int_{\R} \left(\re \overline{Iu} \,\partial_x Iu\right)^2 \,dx + 2\int_{\R} (\partial_x |Iu|^2)^2 \,dx + 4\left(1-\frac{1}{p+1}\right)\int_{\R} |Iu|^{p+3}\,dx.
\end{align}
Thus, integrating $\frac{d}{dt} M_t(Iu)$ on $[0,T]$, we have that 
\begin{align}
    \frac{4p}{p+1}\norm{Iu}^{p+3}_{L^{p+3}_{t,x}([0,T]\times \R)} + \int_0^T E_1 + E_2 + E_3\,dt &\lesssim \sup_{t\in[0,T]} |M_t(Iu)| \\
    &\lesssim \norm{u_0}_{L^2_x (\R)}^3 \norm{Iu}_{L^\infty_t \dot{H}_x^1 ([0,T]\times \R)}.
\end{align}
Now, we estimate the $L^1([0,T])$ norms of the error term using the fact that $\norm{\partial_x a(x-y)}_{L_{x,y}^\infty(\R^2)}\le 1$ and Cauchy-Schwarz. First,
\begin{align}
    \left|\int_0^T E_1 \,dt\right| &\lesssim \norm{u_0}_{L^2_x(\R)}^2 \norm{Iu}_{L^\infty_t \dot{H}_x^1 ([0,T]\times \R)} \norm{F(u)}_{L^1_t L^2_x ([0,T]\times \R)}.
\end{align}
Now, we have the same bound for $E_2$,
\begin{align}
        \left|\int_0^T E_2 \,dt\right| &\lesssim \norm{u_0}_{L^2_x(\R)}^2 \norm{Iu}_{L^\infty_t \dot{H}_x^1 ([0,T]\times \R)} \norm{F(u)}_{L^1_t L^2_x ([0,T]\times \R)}.
\end{align}
For $E_3$, we use Cauchy-Schwarz to obtain
\begin{align}
            \left|\int_0^T E_3 \,dt\right| &\lesssim \norm{Iu}_{L^\infty_t L^6_x([0,T]\times \R)}^3 \norm{F(u)}_{L^1_t L^2_x ([0,T]\times \R)} 
\end{align}
Now, by Sobolev embedding and interpolation,
\begin{align}
    \norm{Iu}_{L^\infty_t L^6_x([0,T]\times \R)} &\lesssim \norm{|\nabla|^{1/3} Iu}_{L^\infty_t L^2_x([0,T]\times \R)} \\
    &\lesssim \norm{u_0}_{L^2_x(\R)}^{2/3}\norm{Iu}_{L^\infty_t \dot{H}_x^1 ([0,T]\times \R)}^{1/3}
\end{align}

Hence, 
\begin{align}
    \left|\int_0^T E_3 \,dt\right| &\lesssim \norm{u_0}_{L^2_x(\R)}^{2}\norm{Iu}_{L^\infty_t \dot{H}_x^1 ([0,T]\times \R)} \norm{F(u)}_{L^1_t L^2_x ([0,T]\times \R)}. 
\end{align}
This finishes the proof of Lemma \ref{lem Morawetz}.
\end{proof}

\begin{lemma}[Estimate for error term]\label{lem Error}
For an interval $J$ such that $\norm{Iu}_{L^{10}_{t,x}(J\times \R)}\le \e$, we have that 
    \begin{align}
        \norm{F(u)}_{L^1_t L^2_x (J\times \R)} \lesssim_{\norm{\jnabla Iu}_{S^0(J\times \R)}} \frac{1}{N^2} 
    \end{align}
    where $F(u)$ is defined in Lemma \ref{lem Morawetz}.
\end{lemma}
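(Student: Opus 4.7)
The plan is to prove Lemma \ref{lem Error} by expanding $F(u)=Iu|Iu|^6-I(u|u|^6)$ on the Fourier side as the $7$-linear form
\[
\widehat{F(u)}(\xi_1) = \int_{\xi_1=\sum_{j=2}^8\xi_j}\left[1-\frac{m(\xi_1)}{\prod_{j=2}^8 m(\xi_j)}\right]\prod_{j=2}^8\widehat{Iu}(\xi_j)\,d\xi_{2\ldots 8},
\]
and running the same style of dyadic case analysis used in the proof of Theorem \ref{Energy Increment Theorem}, with the simplification that the $|\xi_1|^2$ derivative factor is absent. By symmetry I would order $N_2\ge N_3\ge\cdots\ge N_8$ and note $N_1\lesssim N_2$ from Fourier support; the bracketed multiplier vanishes unless $N_2\gtrsim N$, so the sum is a priori restricted to that regime.

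The dominant subcase is $N_2\gtrsim N\gg N_3$, where the mean value theorem together with $|\nabla m(\xi)|\lesssim m(\xi)/|\xi|$ yields the multiplier bound $\lesssim N_3/N_2$. To extract the full $N^{-2}$ decay, I would place both $P_{N_2}Iu$ and $P_{N_3}Iu$ in $L^\infty_t L^2_x$ (each contributing a factor $1/\langle N_j\rangle$ by $\jnabla^{-1}$), and distribute the remaining five low-frequency factors via H\"older using the admissible Strichartz pair $(5,10)$ combined with Bernstein to land in $L^5_t L^\infty_x$. This produces the fiberwise bound
\[
\|P_{N_1}F(u)\|_{L^1_t L^2_x(J\times\R)}\lesssim \frac{N_3}{N_2^{2}\,\langle N_3\rangle}\prod_{j=4}^8\frac{N_j^{1/10}}{\langle N_j\rangle}\,\|\jnabla Iu\|_{S^0(J\times\R)}^{7},
\]
whose dyadic sums over $N_j\le N$ for $j\ge 3$ and over $N_2\ge N$ collapse to $\lesssim N^{-2}$ up to a harmless logarithmic factor absorbed into the constant depending on $\|\jnabla Iu\|_{S^0}$.

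The remaining subcases with two or more frequencies $\gtrsim N$ are handled analogously to Cases 3--6 in the proof of Theorem \ref{Energy Increment Theorem}: the multiplier is bounded by $m(N_1)/\prod_{j\ge 3}m(N_j)$, the denominator $m$'s are compensated by the $\jnabla^{-1}I^{-1}$ decay of the two highest-frequency factors in $L^\infty_t L^2_x$, and the low-frequency Bernstein losses close identically. These configurations give strictly better decay (up to $N^{-5}$ when all seven frequencies are comparable to $N$, analogous to Case 4 of Section \ref{Energy Increment}), so the dominant subcase controls the total bound.

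The main obstacle is the dominant subcase itself: unlike Theorem \ref{Energy Increment Theorem} there is no free $|\xi_1|^2\sim N_2^2$ factor, so the $N^{-2}$ decay must come entirely from combining the commutator gain $N_3/N_2$ with two independent $1/\langle N_j\rangle$ losses from placing two Littlewood--Paley pieces in $L^\infty_t L^2_x$. One must simultaneously ensure that the Bernstein losses on the five low-frequency factors remain summable uniformly in $N$, i.e.\ that $\sum_{0<N_j\le N} N_j^{1/10}/\langle N_j\rangle$ is bounded by a constant independent of $N$, since any residual $N$-dependent factor there would break the $N^{-2}$ bound.
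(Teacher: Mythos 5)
Your overall strategy matches the paper's: expand $F(u)$ as a $7$-linear Fourier multiplier form, run a dyadic case analysis on how many input frequencies exceed $N$, use the mean-value/commutator bound $N_3/N_2$ when exactly one frequency is large, and a coarse bound $m(N_1)/\prod m(N_j)$ otherwise. However, there is a concrete error in the H\"older bookkeeping of your dominant subcase, and it recurs in your treatment of the remaining cases. You place \emph{both} $P_{N_2}Iu$ and $P_{N_3}Iu$ in $L^\infty_t L^2_x$ and the other five factors in $L^5_tL^\infty_x$. The spatial exponents then satisfy $\tfrac12+\tfrac12+5\cdot 0=1$, so the seven-fold product lands in $L^1_tL^1_x$, not $L^1_tL^2_x$; your claimed fiberwise bound for $\norm{P_{N_1}F(u)}_{L^1_tL^2_x}$ does not follow. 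Attempting to repair this by Bernstein on the output ($L^1_x\to L^2_x$ at cost $N_1^{1/2}\sim N_2^{1/2}$) degrades the decay to roughly $N^{-3/2+}$, which is not enough. The same exponent mismatch appears in your sketch of the multi-high-frequency cases, where you again put the two highest factors in $L^\infty_tL^2_x$.

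The fix is that only one factor may carry the $L^2_x$ norm. The $1/\langle N_j\rangle$ gain you want from each high frequency comes from writing $P_{N_j}Iu=\jnabla^{-1}P_{N_j}\jnabla Iu$, and this gain is available in \emph{any} $L^p_x$ norm, so nothing forces the second high-frequency factor into $L^2_x$. The paper's version of your dominant case puts the four largest pieces in $L^4_tL^\infty_x$ (admissible, no Bernstein loss), one piece in $L^\infty_tL^2_x$, and the last two in $L^\infty_tL^\infty_x$ via Bernstein at cost $N_j^{1/2}$; the product of the multiplier gain $N_2/N_1$ with the two $\jnabla^{-1}$ gains $1/(\langle N_1\rangle\langle N_2\rangle)$ still yields $N_1^{-2}$ and hence $N^{-2+}$ after summation. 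Two smaller points: the logarithmic factor from $\sum_{1\le N_3\ll N}N_3/\langle N_3\rangle$ depends on $N$ and cannot be absorbed into a constant depending only on $\norm{\jnabla Iu}_{S^0}$ (the paper accordingly records $N^{-(2-)}$ in this case); and the configurations with two or more large frequencies are not "strictly better" --- they also contribute exactly $N^{-2}$ (for $s>1/4$), so they must be summed with the same care rather than dismissed.
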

\begin{proof}[Proof of Lemma \ref{lem Error}]
We use a Littlewood-Paley decomposition and proceed by cases. We focus on the case where the frequencies are larger than 1 for sake of exposition as the small frequency case is standard.
\newline
\newline
\textbf{Case 1} ($N_2\ll N$). In this case, $m(\xi_i)=1$ for all $i$, so
    \begin{align}
        1-\frac{m(\xi_1+ \cdots + \xi_7)}{m(\xi_1)\cdots m(\xi_7)}=0.
    \end{align}
    \textbf{Case 2} ($N_1 \gtrsim N \gg N_2$). As usual, we have that
    \begin{align}
            \left| 1-\frac{m(\xi_1+ \cdots + \xi_7)}{m(\xi_1)\cdots m(\xi_7)}\right| \lesssim \frac{N_2}{N_1}.
    \end{align}
    Thus, the contribution of Case 2 is
    \begin{align}
     &\lesssim \sum_{N_1 \gtrsim N} \sum_{N_j \ll N, j\ge 2} \frac{N_2}{N_1}  \frac{1}{\langle N_1\rangle \langle N_2\rangle \langle N_3\rangle \langle N_4\rangle \langle N_5\rangle\langle N_6\rangle \langle N_7\rangle } \prod_{l=1}^4 \norm{P_{N_l} \jnabla Iu}_{L^4_t L^\infty_x(J_k\times \R)} \\
     &\times \norm{P_{N_5} \jnabla Iu}_{L^\infty_t L^2_x(J_k\times \R)} \prod_{m=6}^7 \norm{P_{N_m} \jnabla Iu}_{L^\infty_t L^\infty_x(J_k\times \R)} \\
    &\lesssim_{\norm{\jnabla Iu}_{S^0(J\times \R)}}  \sum_{N_1 \gtrsim N}\frac{1}{N_1^2} \sum_{N_j \ll N, j\ge 2} \frac{1}{\langle N_1\rangle  \langle N_3\rangle \langle N_4\rangle \langle N_5\rangle\langle N_6\rangle^{1/2} \langle N_7\rangle^{1/2} } \\
     &\lesssim \frac{1}{N^{2-}}.
         \end{align}
    \textbf{Case 3} ($N_1 \ge N_2\gtrsim N$).    
    In this case we use the coarse estimate
    \begin{align}
                    \left| 1-\frac{m(\xi_1+ \cdots + \xi_7)}{m(\xi_1)\cdots m(\xi_7)}\right| \lesssim \frac{1}{m(\xi_1) \cdots m(\xi_7)}.
    \end{align}
    Hence, the contribution from Case 3 is
    \begin{align}
        &\lesssim \sum_{N_1\ge N_2 \gtrsim N} \frac{1}{m(\xi_1) m(\xi_2)} \norm{P_{N_1} Iu}_{L^\infty_t L^2_x(J\times \R)}\norm{P_{N_2} Iu}_{L^4_t L^\infty_x(J\times \R)} \\
        &\times \sum_{N_7\le \ldots \le N_2} \prod_{j=3}^5 \frac{1}{m(\xi_j)}\norm{P_{N_j} Iu}_{L^4_t L^\infty _x(J\times \R)} \prod_{j=6}^7 \frac{1}{m(\xi_j)}\norm{P_{N_j} Iu}_{L^\infty_t L^\infty _x(J\times \R)} \\
        &\lesssim_{\norm{\jnabla Iu}_{S^0(J\times \R)}} \sum_{N_1\ge N_2 \gtrsim N} \frac{N_1^{-s}}{N^{1-s}}  \frac{N_2^{-s}}{N^{1-s}} \sum_{N_7\le \ldots \le N_2} \prod_{j=3}^5 \frac{1}{N_j m(N_j)} \prod_{j=6}^7 \frac{1}{N_j^{1/2} m(N_j)}.
    \end{align}
    Let us see how to sum over $N_j$ for $3\le j\le 5$. We sum over $N_5$ first. Note that since we only need to focus on the case $N_j\ge 1$,
    \begin{align}
        \sum_{N_6\le N_5\le N_4} \frac{1}{N_5 m(N_5)} \le \sum_{1\le N_5\le N_4} \frac{1}{N_5 m(N_5)} 
    \end{align}
    The worst case scenario is if $N_4\ge N$. Then, we need to slit up the sum as
    \begin{align}
        \sum_{1\le N_5\le N} \frac{1}{N_5}  + \sum_{N \le N_5\le N_4} \frac{N_5^{-s}}{N^{1-s}} \lesssim 1+ \frac{1}{N} \lesssim 1.
    \end{align}
    In the case $N_4\le N$, then only the left most term appears and is thus also controlled by a constant.\par 
    We now demonstrate how to sum over $N_j$ for $j=6,7$. We sum over $N_7$ first. We thus consider
    \begin{align}
        \sum_{1\le N_7\le N_6} \frac{N_7^{-1/2}}{m(N_7)}.
    \end{align}
    Again, the worst case scenario is when $N_6\ge N$, in which case we obtain
    \begin{align}
        \sum_{1\le N_7\le N} N_7^{-1/2} + \sum_{N\le N_7\le N_6} \frac{N_7^{1/2-s}}{N^{1-s}} &\lesssim 1+ \sum_{N\le N_7\le N_6} \frac{N_7^{1/2-s}}{N^{1-s}}.
    \end{align}
    If $s\ge 1/2$, the above sum is bounded by a constant. If $s< 1/2$, we have that
    \begin{align}
                \sum_{1\le N_7\le N_6} \frac{N_7^{-1/2}}{m(N_7)} \lesssim 1+ \frac{N_6^{1/2-s}}{N^{1-s}} \lesssim1+ \frac{N_2^{1/2-s}}{N^{1-s}}.
    \end{align}
    Thus, in either case, the above bound holds and the same is true for the sum over $N_6$. The contribution from Case 3 is then
    \begin{align}
        &\lesssim_{\norm{\jnabla Iu}_{S^0(J\times \R)}} \sum_{N_1\ge N_2 \gtrsim N} \frac{N_1^{-s}}{N^{1-s}}  \frac{N_2^{-s}}{N^{1-s}}\left(1+\frac{N_2^{1/2-s}}{N^{1-s}} \right)^2 \\
        &\lesssim \sum_{N_1\ge N_2 \gtrsim N} \frac{N_1^{-s}}{N^{1-s}}  \frac{N_2^{-s}}{N^{1-s}}\left(1+\frac{N_2^{1-2s}}{N^{2-2s}} \right) \\
        &\lesssim \sum_{N_2 \gtrsim N} \frac{N_2^{-2s}}{N^{2-2s}}\left(1+\frac{N_2^{1-2s}}{N^{2-2s}} \right) \\
        &\lesssim \frac{1}{N^2} + \frac{1}{N^3} \\
        &\lesssim \frac{1}{N^2}
    \end{align}
    as long as $s>1/4$.

Now we finish the proof of Lemma \ref{lem Error}.
\end{proof}
\section{Proof of Main Theorem}
\label{Proof of Main Theorem}

In this section, we will present the proof of the main theorem, and we divide the proof into a global well-posedness theory and a scattering argument.
\subsection{Global Well-Posednesss}
If $u(x,t)$ is a solution to \eqref{NLS} on $[0,T]$, then $u_\lambda(x,t) :=\lambda^{-1/3} u(x/\lambda, t/\lambda^2)$ is a solution on $[0, \lambda^2 T]$. Note that
\begin{align}
    \int_{\R} |\nabla I u_{0, \lambda}|^2\,dx  &\lesssim (N^{1-s} \norm{u_{0, \lambda}}_{\dot{H}^s(\R)})^2 = (N^{1-s} \lambda^{s-1/6 } \norm{u_{0}}_{\dot{H}^s(\R)})^2.
\end{align}
Let $\lambda= C(\norm{u_{0}}_{\dot{H}^s(\R)}) N^{\frac{1-s}{s-1/6}}$ so that
\begin{align}
    E(Iu_{0, \lambda})\le \frac{1}{2}. 
\end{align}
Define
\begin{align}
    W= \{t: E(Iu_{\lambda}(t))\le \frac{9}{10}\} \subseteq [0, \infty)
\end{align}
The goal is to show that $W=[0, \infty)$, giving global well-posedness as $\norm{u_\lambda(t)}_{\dot{H}_x^s(\R)} \lesssim E(Iu_{\lambda}(t))$. We do so by showing that $W$ is non-empty, and both open and closed, hence must be the whole of the connected set $[0, \infty)$ which contains $W$.
Clearly, $0\in W$, $W$ is closed by the Dominated Convergence Theorem. We now show that $W$ is open in $[0, \infty)$. If $W=[0,T]$, there exists $\delta>0$ such that 
\begin{align}
    E(Iu_{\lambda}(t))\le 1
\end{align}
on $[0, T+\delta)$. By Sobolev Embedding, $\norm{Iu_\lambda}_{L^{10}_{t,x}([0, T+\delta ) \times \R)}$ is finite. Thus, we can partition $[0, T+\delta)$ into 
\begin{align}
\norm{Iu_\lambda}_{L^{10}_{t,x}([0, T+\delta) \times \R)}^{10}/\e^{10}
\end{align}
intervals $\{J_k\}_k$ such that 
\begin{align}
    \norm{Iu_\lambda}_{L^{10}_{t,x}(J_k\times \R)}\le \e 
\end{align}
for each $k$. Hence we have control of the Strichartz norms on each interval by the local-well-posedness theory:
\begin{align}
    \norm{\langle  \nabla\rangle Iu}_{S^0(J_k\times \R)}\lesssim 1. 
\end{align}
Now, the almost-Morawetz estimate reads using our bound on the energy of $Iu$,
    \begin{align}
        \norm{Iu}_{L^{10}_{t,x}([0,T]\times\R)}^{10} &\lesssim \norm{u_{0, \lambda}}_{ L_x^2( \R)}^3 +\norm{u_{0, \lambda}}_{L_x^2( \R)}^2 \norm{F(u)}_{L^1_t L^2_x ([0,T]\times \R)} \\
        &\lesssim \lambda^{1/2} \norm{u_{0 }}_{ L_x^2( \R)}^3 + \lambda^{1/3} \norm{u_{0}}_{ L_x^2( \R)}^2 \sum_{k} \frac{1}{N^2}  \\
        &\lesssim  \lambda^{1/2} \norm{u_{0 }}_{ L_x^2( \R)}^3 + \frac{\lambda^{1/3}}{N^2} \norm{u_{0}}_{ L_x^2( \R)}^2 \frac{\norm{Iu_\lambda}_{L^{10}_{t,x}([0, T+\delta) \times \R)}^{10}}{\e^{10}}.
    \end{align}
    Now, if $s>2/7$, then $\lambda^{1/3}/N^2$ can be made as small as we like for large enough $N$, and in particular the second term in the last line can be hidden on the left hand side. Doing so, we find that
    \begin{align}
        \norm{Iu}_{L^{10}_{t,x}([0,T]\times\R)}^{10} \lesssim \lambda^{1/2} \norm{u_{0}}_{ L_x^2( \R)}^3.
    \end{align}
    Now, partition $[0, T+\delta]$ into $\sim \lambda^{1/2 }N^{-1}/\e^{10}$ many sub-intervals $F_k$, where each $F_k$ is the union of $N$ intervals $F_{k,m}$ such that $\norm{Iu}_{L^{10}_{t,x}(F_{k,m}\times\R)}\le \e$ for each $k$. Taking the first big interval $F_k$ and using Hölder's inequality
    \begin{align}
    \sup_{t_1, t_2\in F_k} |E(Iu(t_1))-E(Iu(t_2))| &\lesssim \frac{N}{N^5} + \sum_{m=1}^N \frac{\norm{P_{>cN} \langle \nabla \rangle Iu}_{L^4_t L_x^\infty(F_{k,m}\times\R)}}{N^{3/2}} \\
    &\lesssim \frac{1}{N^4} + \frac{1}{N^{3/2}} \left(\sum_{m=1}^N 1\right)^{3/4} \left(\sum_{m=1}^N  \norm{P_{>cN} \langle \nabla \rangle Iu}_{L^4_t L_x^\infty(F_{k,m}\times\R)}^4\right)^{1/4} \\
    &\lesssim \frac{1}{N^4} + \frac{1}{N^{3/4}} \norm{P_{>cN} \langle \nabla \rangle Iu}_{L^4_t L_x^\infty(F_{k}\times\R)}.
    \end{align} 
We now show that 
\begin{align}
    \norm{P_{>cN} \langle \nabla \rangle Iu}_{L^4_t L_x^\infty(F_{k}\times\R)} \lesssim 1
\end{align}
using our smoothing estimates. Let $F_{k,m}= [a_m, a_{m+1}]$. We now write
\begin{align}
    u(t) &= e^{it\Delta }u_0 + \int_0^t e^{i(t-\tau)\Delta}(u|u|^6)\,d\tau \\
    &= e^{it\Delta }u_0 + \sum_{j=1}^N \int_{a_{j-1}}^{a_j} e^{i(t-\tau)\Delta}(u|u|^6)\,d\tau + \int_{a_N}^t e^{i(t-\tau)\Delta}(u|u|^6)\,d\tau  \\
    &= e^{it\Delta }u_0 + \sum_{j=1}^N e^{i(t-a_j)\Delta} \int_{a_{j-1}}^{a_j} e^{i(a_j-\tau)\Delta } (u|u|^6)\,d\tau + \int_{a_N}^t e^{i(t-\tau)\Delta}(u|u|^6)\,d\tau \\
    &= e^{it\Delta }u_0 + \sum_{j=1}^N e^{i(t-a_j)\Delta} u_{j-1}^{nl}(a_j) + u^{nl}_N(t),
\end{align}
where we have defined
\begin{align}
    u_{j-1}^{nl}(a_j) \equiv \int_{a_{j-1}}^{a_j} e^{i(a_j-\tau)\Delta } (u|u|^6)\,d\tau, \\
    u_N^{nl}(t) \equiv \int_{a_N}^t e^{i(t-\tau)\Delta}(u|u|^6)\,d\tau.
\end{align}
Thus,
\begin{align}
    \norm{P_{>cN} \langle \nabla \rangle Iu}_{L^4_t L_x^\infty(F_{k}\times\R)} &\lesssim \norm{P_{>cN}\jnabla Iu_0}_{L^2_x(\R)} + \sum_{m=1}^N \norm{P_{>cN}\jnabla Iu_m(a_m)}_{L^2_x(\R)}  \\
    &+ \left(\sum_{m=1}^N \norm{P_{>cN} \langle \nabla \rangle Iu}_{L^4_t L_x^\infty(F_{k,m}\times\R)}^4\right)^{1/4}. 
\end{align}
Note that
\begin{align}
    \norm{P_{>cN}\jnabla Iu_0}_{L^2_x(\R)}^2 &\lesssim \norm{\nabla Iu_0}_{L^2_x(\R)}^2 \le E(Iu_0)\le 1.
\end{align}

By the smoothing estimates,
\begin{align}
    \sum_{m=1}^N \norm{P_{>cN}\jnabla Iu_m(a_m)}_{L^2_x(\R)} \lesssim \frac{N}{N}=1
\end{align}
and
\begin{align}
    \left(\sum_{m=1}^N \norm{P_{>cN} \langle \nabla \rangle Iu}_{L^4_t L_x^\infty(F_{k,m}\times\R)}^4\right)^{1/4} \lesssim \left(\sum_{m=1}^N (N^{-1/4})^4\right)^{1/4} \lesssim 1.
\end{align}
Hence, 
\begin{align}
        \sup_{t_1, t_2\in F_k} |E(Iu(t_1))-E(Iu(t_2))| &\lesssim \frac{1}{N^{3/4}}.
\end{align}
Adding up the increment over all sub-intervals, we have that
\begin{align}
    \sup_{t\in[0, T+\delta]} E(Iu_\lambda(t)) \le \frac{1}{2} + C \frac{\lambda^{1/2} N^{-1}}{\e^{10}} \frac{1}{N^{3/4}} \le \frac{9}{10}
\end{align}
where the second inequality holds if $s>19/54$ and $N$ is large enough. Hence, $W$ is open, so $W=[0,\infty)$ and \eqref{NLS} is globally well-posed.

\subsection{Scattering}
We have shown that for $N$ large enough and $s>19/54$, we have that
\begin{align}
    \sup_{t\in[0, \infty)} E(Iu_\lambda(t))\le 1.
\end{align}
In fact, this shows that
\begin{align}
    \norm{u_\lambda}_{L^\infty_t H^s_x([0,\infty) \times \R)} \lesssim 1.
\end{align}
By our almost Morawetz estimate, this also means that 
\begin{align}
    \norm{Iu_\lambda}_{L^{10}_{t,x}([0, \infty) \times \R)}^{10} \lesssim \lambda^{1/2} \norm{u_0}_{L^2_x(\R)}^3.
\end{align}
We now show that the Morawetz norm of $u_\lambda$ is globally bounded as long as the Strichartz norms are uniformly bounded. Indeed, 
\begin{align}
    \norm{u_\lambda}_{L^{10}_{t,x}([0,\infty) \times \R)} &\le  \norm{P_{\le N} Iu_\lambda}_{L^{10}_{t,x}([0,\infty) \times \R)} + \norm{P_{> N} u_\lambda}_{L^{10}_{t,x}([0,\infty) \times \R)} \\
    &\le  \norm{ Iu_\lambda}_{L^{10}_{t,x}([0,\infty) \times \R)} + \sum_{N_k > N} \frac{N_k^{-s}}{N^{1-s}} \norm{\jnabla Iu_\lambda}_{L^{10}_{t,x}([0,\infty) \times \R)}.
\end{align}
The first is term finite, while the second is controlled by, using Bernstein,
\begin{align}
    \sum_{N_k > N} \frac{N_k^{1/5-s}}{N^{1-s}} \norm{\jnabla Iu_\lambda}_{L^{10}_{t}L^{10/3}_x([0,\infty) \times \R)} \le N^{-4/5} \norm{\jnabla Iu_\lambda}_{S^0([0,\infty)\times \R)}.
\end{align}
as long as $s>1/5$. Now, $[0, \infty)$ can be partitioned into $\norm{ Iu_\lambda}_{L^{10}_{t,x}([0,\infty) \times \R)}^{10}/\e^{10}$ finitely many-intervals $J_l$ such that $\norm{I u_\lambda}_{L^{10}_{t,x}(J_l \times \R)}\le \e$. On each of these intervals, 
from the local well-posedness theory,
\begin{align}
    \norm{\jnabla Iu_\lambda}_{S^0(J_l \times \R)} \lesssim 1.
\end{align}
Adding the finitely many intervals up, we have that
\begin{align}
        \norm{\jnabla Iu_\lambda}_{S^0([0, \infty) \times \R)} \lesssim \norm{ Iu_\lambda}_{L^{10}_{t,x}([0,\infty) \times \R)}^{10}/\e^{10} \lesssim \frac{\lambda^{1/2}}{\e^{10}} \norm{u_0}_{L^2_x(\R)}^3.
\end{align}
Thus, for large enough $N$,
\begin{align}
        \norm{u_\lambda}_{L^{10}_{t,x}([0,\infty) \times \R)}^{10} \lesssim \lambda^{1/2} \norm{u_0}_{L^2_x(\R)}^3.
\end{align}
so we have a global bound on the true Morawetz norm as well by rescaling.\par 
We now show why we have a global Strichartz bound for $\jnabla^s u$. Note that this would imply scattering as we would then have
\begin{align}
    \norm{\int_t^\infty e^{-i\tau\Delta} (u|u|^6)\,d\tau  }_{H^s(\R)} &\lesssim \norm{ \jnabla^s (u|u|^6)}_{L^{4/3}_t L^1_x([t,\infty) \times\R)} \\
    &\lesssim \norm{\jnabla^s u}_{L^4_t L^\infty_x([t,\infty)\times \R)} \norm{u}_{L^{12}_t L^6_x([t,\infty)\times \R)}^6 \\
    &\lesssim \norm{\jnabla^s u}_{S^0([t,\infty)\times \R)} \norm{u}_{L^{12}_t L^6_x([t,\infty)\times \R)}^6.
\end{align} 
As in the proof of local well-posedness, we break up into large and small frequencies. 
\begin{align}
    \norm{P_{\le N} u}_{L^{12}_t L^6_x([t,\infty)\times \R)} &\le \norm{P_{\le N} u} _{L^{\infty}_t L^2_x([t,\infty)\times \R)}^{1/6} \norm{P_{\le N} u} _{L^{10}_{t,x} ([t,\infty)\times \R)}^{5/6} \\
    &\le \norm{\jnabla^s u}_{S^0([t,\infty)\times \R)}^{1/6} \norm{u} _{L^{10}_{t,x} ([t,\infty)\times \R)}^{5/6}.
\end{align}
For the large frequencies, 
\begin{align}
    \norm{P_{> N} u}_{L^{12}_t L^6_x([t,\infty)\times \R)} &\le \sum_{N_k > N} N^{-s} \norm{\jnabla^ s P_{N_k} u}_{L^{12}_t L^6_x([t,\infty)\times \R)} \\
    &\lesssim \sum_{N_k > N} N^{1/6-s} \norm{\jnabla^ s P_{N_k} u}_{L^{12}_t L^3_x([t,\infty)\times \R)} \\
    &\lesssim N^{1/6 -s} \norm{\jnabla^s u}_{S^0([t,\infty)\times \R)}.
\end{align}
for $s>1/6$.
Thus, 
\begin{align}
        \norm{\int_t^\infty e^{-i\tau\Delta} (u|u|^6)\,d\tau  }_{H^s(\R)} \lesssim \norm{\jnabla^s u}_{S^0([t,\infty)\times \R)}^2 \norm{u} _{L^{10}_{t,x} ([t,\infty)\times \R)}^5 + \norm{\jnabla^s u}_{S^0([t,\infty)\times \R)}^7 N^{1-6s} \to 0
\end{align}
as $t\to\infty$, implying scattering in $H^s$ for $s>19/54$. Similar arguments show the result for $t\to-\infty$. \par 
Note that the global $\jnabla^s$ Strichartz bound follows from the fact that the Morawetz norm of $Iu$ is globally bounded, the above estimates for $\norm{ \jnabla^s (u|u|^6)}_{L^{4/3}_t L^1_x(J\times\R)}$, and the continuity method all applied in the same spirit as the proof of the global Strichartz bound for $\jnabla Iu$.

\bibliographystyle{abbrv}
\bibliography{ref}
\end{document}